\newcommand{\F}{\mathcal F}
\renewcommand{\H}{\mathcal{H}}
\newcommand{\R}{\mathbb{R}}
\renewcommand{\SS}{\mathbb{S}}
\newcommand{\Om}{\Omega}
\newcommand{\g}{\gamma}
\newcommand{\s}{\sigma}
\newcommand{\dist}{{\rm dist}}
\newcommand{\spt}{{\rm spt}}
\newcommand{\weakstar}{\stackrel{*}{\rightharpoonup}}
\newcommand{\pa}{\partial}
\newcommand{\cc}{\subset\!\subset}
\newcommand{\cl}{\mathrm{cl}\,}
\newcommand{\D}{\mathcal{D}}
\newcommand{\var}{\mathbf{var}\,}
\newcommand{\C}{\mathcal{C}}
\newcommand{\ehn}{\overset{\mathcal{H}^n}{=}}
\newcommand\restr[2]{{
  \left.\kern-\nulldelimiterspace 
  #1 
  \right|_{#2} 
  }}
\newcommand{\K}{\mathcal{K}}
\newcommand{\RR}{\mathcal{R}}
\newcommand{\one}{{\scriptscriptstyle{(1)}}}
\newcommand{\zero}{{\scriptscriptstyle{(0)}}}
\newcommand{\kkms}{\mathcal{K}}
\newcommand{\shn}{\overset{\mathcal{H}^n}{\subset}}
\newcommand{\mres}{\mathbin{\vrule height 1.6ex depth 0pt width 
0.13ex\vrule height 0.13ex depth 0pt width 1.3ex}}
\theoremstyle{plain}
\newtheorem{theorem}{Theorem}[section]
\newtheorem{proposition}[theorem]{Proposition}
\newtheorem*{theorem*}{Theorem}
\newtheorem*{corollary*}{Corollary}
\theoremstyle{definition}
\newtheorem{definition}[theorem]{Definition}
\newtheorem{remark}[theorem]{Remark}
\newtheorem*{notation*}{Notation}
\numberwithin{equation}{section}
\numberwithin{figure}{section}
\newcommand{\wire}{\mathbf{W}}
\title{On the relaxation of Gauss's capillarity theory under spanning conditions}
\author{Michael Novack}
\address{Department of Mathematical Sciences, Carnegie Mellon University, Wean Hall 6113, Pittsburgh, PA 15213, United States of America}
\email{mnovack@andrew.cmu.edu}
\begin{document}

\begin{abstract} 
We study a variational model for soap films in which the films are represented by sets with fixed small volume rather than surfaces. In this problem, a minimizing sequence of completely ``wet" films, or sets of finite perimeter spanning a wire frame, may converge to a film containing both wet regions of positive volume and collapsed (dry) surfaces. When collapsing occurs, these limiting objects lie outside the original minimization class and instead are admissible for a relaxed problem. Here we show that the relaxation and the original formulation are equivalent by approximating the collapsed films in the relaxed class by wet films in the original class.
\end{abstract}

\maketitle

\setcounter{tocdepth}{2}


\section{Introduction}\label{sec:intro}
We analyze a model for soap films based on the classical Gauss free energy functional from capillarity theory \cite[Ch. 1.4]{Finn}. In this model, which was proposed by King-Maggi-Stuvard in \cite{KMS1}, one minimizes the surface tension energy among sets with small volume $v$ that satisfy a spanning condition with respect to a wire frame $\wire\subset \mathbb{R}^{n+1}$, the complement of which is the container accessible to the soap. Informally, the problem is
\begin{align}\label{intro soap film cap}
    \inf \{\mathcal{H}^n(\pa E \setminus \wire) : E \subset \mathbb{R}^{n+1}\setminus \wire\,,\,\,|E|=v\,,\mbox{ $\pa E$ spans $\wire$}  \}\,.
\end{align}
In this paper we use the notion of ``spanning" from \cite{MNR1}, which generalizes the idea of Harrison-Pugh \cite{HP16}; see Section 2 for a complete discussion. The addition of the volume constraint adds a length scale to the minimal surface model, which is recovered in the vanishing volume limit. The soap film capillarity model is thus well-suited for describing some features of real films that cannot be captured by minimal surfaces. Such features include the thickened tubes of liquid ``wetting" a line of $Y$-point singularities, which play an important role in behaviors such as drainage and are known in the physical literature as Plateau borders; see e.g. \cite{foamdrainageI, foamdrainageII} and the books \cite{weaireBOOK} and \cite[Ch. 2]{foamchapter}.

An important aspect of the model is the phenomenon of collapsing, which occurs in regions where it is energetically convenient for the (multiplicity one) boundaries of a minimizing sequence $\{E_j\}_j$ to collapse onto a multiplicity two surface; see Figure \ref{collapsed triple junction} below. Collapsing presents mathematical challenges stemming 
from the fact that the limit of a minimizing sequence for \eqref{intro soap film cap}, or generalized minimizer, may not belong to the original class, but rather to a relaxed class also including collapsed competitors. This complicates the study of properties such as regularity and the wetting of singularities, since the convergence of the minimizing sequence to the generalized minimizer only enables one to test the minimality amongst objects which also arise as limits of sets of finite perimeter and not against the entire relaxed class.
\par
Our main goal is to show that \eqref{intro soap film cap} and its relaxation to collapsed films are in fact equivalent minimization problems. Concise statements of the main results in this article are as follows:
\begin{enumerate}[label=(\roman*)]
    \item every collapsed competitor in the relaxed class can be approximated by a sequence of non-collapsed competitors with the same volume (Theorem \ref{thm main of approximation});
    \item generalized minimizers arising as limits in \eqref{intro soap film cap} minimize the relaxed energy functional among the entire relaxed class (Theorem \ref{thm main of minimality}).
\end{enumerate}
The strengthened minimality of (ii) simplifies the mathematical investigation of collapsing and wetting phenomena in the soap film capillarity model.

\par
The paper is organized as follows. Section \ref{sec:background} contains the necessary background and precise statements of our results. After collecting some preliminaries in Section \ref{sec:prelim}, we then prove Theorems \ref{thm main of approximation}-\ref{thm main of minimality} in Section \ref{sec:proofs}.

\begin{figure}
\begin{overpic}[scale=0.6,unit=1mm]{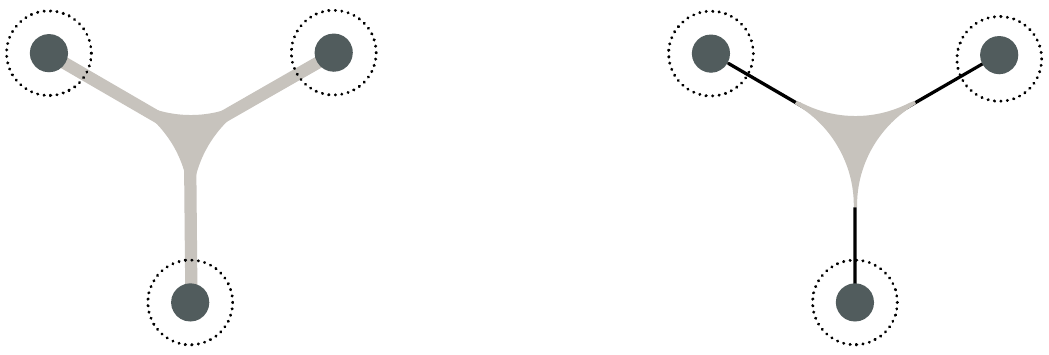}
    \put(60,25){\small{$\gamma_1$}}
    \put(-3,25){\small{$\gamma_1$}}
    \put(36.5,25){\small{$\gamma_2$}}
    \put(100,25){\small{$\gamma_2$}}
    \put(23,4){\small{$\gamma_3$}}
    \put(86.5,4){\small{$\gamma_3$}}
    \put(32,12){\small{$\mathcal{C}=\{\gamma:\exists \, i \,\mbox{ s.t. }[\gamma]=[\gamma_i]\}$}}
    \put(12,17){\small{$E'$}}
    \put(76,17){\small{$E$}}
    \put(82,10){\small{$K \setminus \partial^* E$}}
\end{overpic}
\caption{On the left is a non-collapsed set $E'$ approximating the collapsed minimizer on the right. $K\setminus \partial^* E$ is the three black segments.}\label{collapsed triple junction}
\end{figure}

\section{Statements}\label{sec:background}
\subsection{Spanning and the Plateau problem}

In order to formulate any Plateau-type problem, one must make mathematically rigorous the notion that a soap film ``spans a given wire frame.'' The approach taken here is based on a generalization by F. Maggi, D. Restrepo, and the author \cite{MNR1} of the idea of Harrison-Pugh \cite{HP16}. Among the various versions of Plateau's problem, a key feature of the Harrison-Pugh version is that it leads to minimizers that exhibit the physical singularities predicted by Plateau and validated mathematically in \cite{Taylor}. {\color{black}We mention that \cite{HP16} has spurred much recent progress on the Plateau problem. More generally, Harrison-Pugh \cite{HP17} proved an existence result in arbitrary dimension/co-dimension for the anisotropic Plateau problem in a large class of ambient spaces and encompassing several different spanning conditions inside a general axiomatic framework. We refer the reader also to \cite{DGM,DPDRG,HP16b,DLDRG,HP17,DR,FangKo,DPDRG2} and the references therein for additional works in this vein.} 

Let $n\geq 0$, fix closed $\wire\subset \mathbb{R}^{n+1}$ (the wire frame), and set $\Omega=\mathbb{R}^{n+1}\setminus \wire$. Following the presentation of the Harrison-Pugh spanning condition from \cite{DGM}, we define a {\bf spanning class} $\mathcal{C}$ to be a non-empty family of smooth embeddings of $\mathbb{S}^1$ into $\Omega$ which is closed by homotopy in $\Omega$. 

\begin{definition}[Homotopic spanning for closed sets]\label{def homotop span top}
A relatively closed subset $S$ of $\Omega$ is $\mathcal{C}${\bf -spanning} $\wire$ if $S \cap \gamma \neq \varnothing$ for all $\gamma \in \mathcal{C}$. 
\end{definition}

\noindent The corresponding formulation of the Plateau problem is
\begin{align}\label{plateau problem}
    \ell := \inf\{\mathcal{H}^n(S): \mbox{$S\subset \Omega$ is relatively closed and $\mathcal{C}$-spanning $\wire$} \}.
\end{align}
Compactness and lower-semicontinuity for minimizing sequences yield the existence of a minimizer \cite{HP16, DGM}. In these arguments, the asymptotic minimality of the sequence is utilized to make energy comparisons that yield a limiting surface satisfying Definition \ref{def homotop span top}. In scenarios such as the proof of the approximation item (i) from the introduction or the Allen-Cahn setting of \cite{MNR2}, it is useful to have a definition of spanning compatible with energy-bounded but non-minimizing sequences. Such a framework was developed in \cite{MNR1}, which we now recall.
\par
If $\mathcal{C}$ is a spanning class for a closed set $\wire$, we define the {\bf tubular spanning class} $\mathcal{T}(\mathcal{C})$ associated to $\mathcal{C}$ to be the family of triples $(\gamma, \Phi, T)$ such that $\g\in\C$, $T = \Phi(\mathbb{S}^1 \times B_1^n)$, and
  \[
  \textup{$\Phi:\mathbb{S}^1 \times \cl B_1^n \to  \Omega$ is a diffeomorphism with $\restr{\Phi}{\mathbb{S}^1\times \{0\}}= \gamma$}\,.
  \]
  When $(\gamma, \Phi, T)\in\mathcal{T}(\mathcal{C})$, the slice of $T$ defined by $s\in\mathbb{S}^1$ is
  \[
  T[s]=\Phi(\{s\}\times B_1^n)\,.
  \]
We will need the measure theoretic notion of connectedness from \cite{CCDPMSteiner,CCDPM17}. For a Borel set $G$, we let $G^{(t)}$, $t\in [0,1]$, be the points of Lebesgue density $t$ and $\partial^e G= (G^{(1)} \cup G^{(0)})^c$. For Borel sets $S$, $G$, $G_1$ and $G_2$ in $\mathbb{R}^{n+1}$, $S$ {\bf essentially disconnects} $G$ into $\{G_1,G_2\}$ if
\begin{equation}\notag
|G\Delta(G_1\cup G_2)|=0\,,\quad|G_1||G_2|>0\,,\quad\textup{and}\quad
G^{(1)}\cap \partial^e G_1 \cap \partial^e G_2\shn S \,.
\end{equation}
Here $|\cdot|$ is the Lebesgue measure $\mathcal{L}^{n+1}$ and $A\shn B$ means $\mathcal{H}^n(A \setminus B)=0$; in words, ``$A$ is $\mathcal{H}^n$-contained in $B$." 

\begin{definition}[Measure-theoretic spanning]\label{def homot span borel}
    Given a closed set $\wire$ and a spanning class $\mathcal{C}$ for $\wire$, a Borel set $S \subset\Omega$ is {\bf $\mathcal{C}$-spanning} $\wire$ if for each $(\gamma,\Phi, T)\in \mathcal{T}(\mathcal{C})$, $\mathcal{H}^1$-a.e. $s\in \mathbb{S}^1$ has the following property: 
    \begin{eqnarray}\notag
      &&\mbox{for $\mathcal{H}^n$-a.e. $x\in T[s]$, $\exists$ a partition $\{T_1,T_2\}$ of $T$ with $x\in\partial^e T_1 \cap \partial^e T_2$}\,,\\ \nonumber
      &&\quad\mbox{and such that $S \cup  T[s]$ essentially disconnects $T$ into $\{T_1,T_2\}$}\,.
    \end{eqnarray}
\end{definition}

\noindent Associated to this concept is the Plateau problem with measure-theoretic spanning
\begin{align}\notag
    \ell_{\rm B} := \inf\{\mathcal{H}^n(S): \mbox{$S\subset \Omega$ is 
Borel and $\mathcal{C}$-spanning $\wire$} \}\,.
\end{align}
Definition \ref{def homot span borel} is an acceptable generalization of Definition \ref{def homotop span top} for two reasons.
\begin{itemize}
\item {\it Equivalence of spanning definitions:}
If $S$ is relatively closed in $\Omega$, then $S$ satisfies Definition \ref{def homotop span top} if and only if it satisfies Definition \ref{def homot span borel} \cite[Theorem A.1]{MNR1}. 

\item {\it Equivalence of Plateau problems: } If $\ell<\infty$, then $\ell_{\rm B}=\ell$, and any minimizer for $\ell_{\rm B}$ is $\mathcal{H}^n$-equivalent to some relatively closed minimizer for $\ell$ \cite[Theorem 6.1]{MNR1}. 
\end{itemize}

\noindent 

\subsection{Soap film capillarity model from \cite{KMS1,KMS2,KMS3}} 
Building upon \cite{MSS}, in which soap films were studied as regions of small volume, for a compact set $\wire\subset\mathbb{R}^{n+1}$, in \cite{KMS1} King-Maggi-Stuvard formulated the problem:
\begin{align}\label{soap film capillarity model old}
    \psi(v) &= \inf\{\mathcal{H}^n(\partial E \cap \Omega): \mbox{$E\in \mathcal{E}$, $|E|=v$, $\Omega \cap\partial E$ is $\mathcal{C}$-spanning $\wire$} \}\,,
\end{align}
where
\begin{align}\notag
    \mathcal{E}=\{E \subset \Omega : \mbox{ $E$ is open and $\partial E$ is $\mathcal{H}^n$-rectifiable} \}\,.
\end{align}
As explained in the introduction, one cannot expect a minimizing sequence in \eqref{soap film capillarity model old} to converge to an admissible competitor for \eqref{soap film capillarity model old}, so they also consider the relaxed class
\begin{align}\notag
   \mathcal{K}:= \{(K,E): \mbox{ $E\subset \Omega$}&\mbox{ is open with } \Omega \cap \cl(\partial^* E) = \Omega \cap \partial E\subset K\\ \label{k def}
   &\mbox{$K\subset \Omega$ is relatively closed and $\mathcal{H}^n$-rectifiable}\}\,.
\end{align}
with the corresponding energy
\begin{align}\notag
    \mathcal{F}_{\rm bd}(K,E) = \mathcal{H}^n( \Omega \cap \partial^* E ) + 2 \,\mathcal{H}^n (K \setminus \partial ^* E)\,;
\end{align}
see below for an explanation of the subscript ``bd". The main results of \cite{KMS1} are two-fold.
\begin{itemize}
    \item {\it Existence}: There exist {\bf generalized minimizers}, that is, there exists $(K,E)\in \mathcal{K}$ such that $K$ is $\mathcal{C}$-spanning $\wire$, $|E|=v$, and $\psi(v)=\mathcal{F}_{\rm bd}(K,E)$ \cite[Theorem 1.4]{KMS1}.
\item {\it Convergence to the Plateau problem}: $\psi(v)\to 2\,\ell$ with the corresponding subsequential convergence of minimizers in the sense of Radon measures \cite[Theorem 1.9]{KMS1}.
\end{itemize}
When $K \setminus \pa E \neq \varnothing$, the generalized minimizer $(K,E)$ is ``collapsed" and does not belong to the original admissible class from \eqref{soap film capillarity model old}. A natural conjecture therefore is that generalized minimizers for \eqref{soap film capillarity model old} are minimal among the relaxed class $\mathcal{K}$ of possibly collapsed soap films \cite[Remark 1.7]{KMS1}.

\subsection{Formulation and previous results from \cite{MNR1}}\label{sec:previous results}
Following \cite[Section 1.4]{MNR1}, we use Definition \ref{def homot span borel} to expand the soap film capillarity model to finite perimeter sets. We are actually able to treat two versions of the problem which differ in the choice of spanning set:
\begin{align}\label{soap film capillarity model}
    \psi_{\rm bd}(v) &= \inf\{P(E; \Omega): E\subset \Omega,\, P(E;\Omega)<\infty, \, |E|=v, \,\mbox{$\Omega \cap\partial^* E$ $\mathcal{C}$-spans $\wire$} \},\hspace{.5cm}\mbox{and} \\ \label{soap film capillarity model bulk}
    \psi_{\rm bk}(v) &= \inf\{P(E; \Omega):E\subset \Omega,\,P(E;\Omega)<\infty,\, |E|=v,\, E^\one \cup(\Omega \cap\partial^* E) \mbox{ $\mathcal{C}$-spans $\wire$} \}.
\end{align}
Here $P(E;\Omega)$, $|E|$, and $\partial^* E$ denote the relative perimeter of $E$ in $\Omega$ and the Lebesgue measure and reduced boundary of $E$, respectively. The subscripts ``bd" and ``bk" stand for boundary and bulk to emphasize the particular spanning set. The relaxed class is
\begin{align}\label{relaxed class}
    \kkms_{\rm B}&:= \big\{(K,E) :\mbox{$K,E\subset \Omega$ are Borel, $P(E;F)<\infty$ $\forall F \cc \Om$, and $\Omega \cap \partial^* E \shn K$}\big\}\,,
\end{align}
and the corresponding energies and variational problems are
\begin{align}\notag
    \mathcal{F}_{\rm bd}(K,E) &= \mathcal{H}^n( \Omega \cap \partial^* E ) + 2 \,\mathcal{H}^n (K \setminus \partial ^* E)\,, \\ \notag
    \mathcal{F}_{\rm bk}(K,E) &= \mathcal{H}^n( \Omega \cap \partial^* E ) + 2 \,\mathcal{H}^n (K \cap E^\zero)\,,
\end{align}
and
\begin{align}\label{modified kms relax prob}
    \Psi_{\rm bd}(v) &= \inf \{\mathcal{F}_{\rm bd}(K,E) : (K,E)\in \mathcal{K}_{\rm B}, \, |E|=v,\, \mbox{$K$ is $\mathcal{C}$-spanning $\wire$} \}\,\\ \label{modified kms relax bulk}
    \Psi_{\rm bk}(v) &= \inf \{\mathcal{F}_{\rm bk}(K,E) : (K,E)\in \mathcal{K}_{\rm B}, \, |E|=v,\, \mbox{$E^\one \cup K$ is $\mathcal{C}$-spanning $\wire$} \}\,.
\end{align}
Each problem \eqref{soap film capillarity model} and \eqref{soap film capillarity model bulk} has some advantages: for example the lower-dimensional spanning set $\Om \cap \pa^* E$ in \eqref{soap film capillarity model} is closer mathematically to the spanning surfaces in the Plateau problem $\ell$, while the bulk spanning problem \eqref{soap film capillarity model bulk} arises in an asymptotic limit of Allen-Cahn problems \cite{MNR2} with spanning constraint. In any case, we suspect that (generalized) minimizers for both problems coincide, at least for small volumes.
For the bulk problem $\Psi_{\rm bk}(v)$, in \cite{MNR1}, F. Maggi, D. Restrepo, and the author proved:
\begin{itemize}
    \item {\it Existence}: There exists $(K,E)\in \mathcal{K}\subset\mathcal{K}_{\rm B}$ such that $(K,E)$ is minimal for $\Psi_{\rm bk}(v)$.
    \item {\it Regularity}: There exists $\Sigma$ of Hausdorff dimension at most $n-7$ such that $(\Om \cap \pa^* E)\setminus \Sigma$ is smooth with constant mean curvature and $K \setminus (\pa E\cup \Sigma)$ is smooth with zero mean curvature. Also, $\Gamma=(\Om \cap \pa E) \setminus (\pa^* E\cup \Sigma)$ is locally $\mathcal{H}^{n-1}$-rectifiable, and, for any $x\in \Gamma$, there is $r>0$ such that $K\cap B_r(x)$ is a union of two $C^{1,1}$ hypersurfaces touching tangentially at $x$.
    \item {\it Convergence to the Plateau problem}: $\Psi_{\rm bk}(v)\to 2\,\ell$ with the corresponding subsequential convergence of minimizers in the sense of Radon measures.
\end{itemize}





\noindent Note that any admissible $E$ for $\psi_{\rm bd}(v)$ or $\psi_{\rm bk}(v)$ corresponds to a pair $(\partial^*E \cap \Omega,E)\in \mathcal{K}_{\rm B}$ which is admissible for $\Psi_{\rm bd}(v)$ or $\Psi_{\rm bk}(v)$, respectively. Therefore $\Psi_{\rm bd}(v) \leq \psi_{\rm bd}(v)$, and similarly $\Psi_{\rm bk}(v) \leq \psi_{\rm bk}(v)$, but equality is unclear. This leads to the following question:

\medskip
\begin{center}
{\bf Are the soap film capillarity model and its relaxation equivalent minimization problems, i.e. does $\bm{\Psi}_{\rm {\bf bd}}\bm{(v)}=\bm{\psi}_{\rm {\bf bd}}\bm{(v)}$ and $\bm{\Psi}_{\rm {\bf bk}}\bm{(v)}=\bm{\psi}_{\rm {\bf bk}}\bm{(v)}$?}
\end{center}

\subsection{Main results}\label{sec:results and discussion} In our main results, we answer this question. We obtain the equivalence as a corollary of an approximation theorem which we state first. 
\begin{theorem}[Approximation of Collapsed Competitors]\label{thm main of approximation}
    Let $\wire\subset \mathbb{R}^{n+1}$ be closed and $\mathcal{C}$ be a spanning class for $\wire$ such that $\ell<\infty$. If $(K,E)\in \kkms_{\rm B}$, $\mathcal{H}^n(K)<\infty$, and $\delta>0$, then there exists a set $E'\subset \Omega$ of finite perimeter in $\Omega$ such that $|E|=|E'|$,
\begin{align}\label{approximation theorem L1 closeness}
    |E' \Delta E| &\leq \delta\,, \\
\label{approximation theorem equation}
        \mathcal{H}^n(\partial^* E' \cap \Omega) &\leq \mathcal{F}(K,E) + \delta\,,
    \end{align}
and, if $K$ is $\mathcal{C}$-spanning $\wire$, so is $\Om \cap \pa^* E'$, while if $K \cup E^\one$ is $\mathcal{C}$-spanning $\wire$, so is $(E')^\one \cup (\Om \cap \pa^* E') $.
\end{theorem}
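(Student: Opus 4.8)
\noindent\emph{Proof plan.} The plan is to replace the collapsed, multiplicity‑two part of $K$ by a thin ``wet'' slab of positive volume glued onto $E$, the two faces of the slab accounting for the doubled energy carried by the collapsed surface. Write $\Sigma:=K\setminus\partial^*E$ when the goal is to preserve $\mathcal{C}$-spanning of $\Omega\cap\partial^*E$ (so that $\mathcal{F}=\mathcal{F}_{\rm bd}$), and $\Sigma:=K\cap E^\zero$ when the goal is to preserve $\mathcal{C}$-spanning of $E^\one\cup(\Omega\cap\partial^*E)$ (so that $\mathcal{F}=\mathcal{F}_{\rm bk}$); in either case $\mathcal{H}^n(\Sigma)<\infty$, $\mathcal{H}^n(\Omega\cap\partial^*E)\le\mathcal{H}^n(K)<\infty$ (from $\Omega\cap\partial^*E\shn K$), and $\mathcal{H}^n(\Omega\cap\partial^*E)+2\,\mathcal{H}^n(\Sigma)=\mathcal{F}(K,E)$. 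One then sets $\widehat E:=E\cup V$ for a suitable open $V\subset\Omega$ with $\Sigma\subset V\subset V^\one$, with $|V|$ small, and with $\mathcal{H}^n(\Omega\cap\partial^*V)\le 2\,\mathcal{H}^n(\Sigma)+\delta'$ for a small parameter $\delta'\in(0,\delta]$; subadditivity of perimeter then yields
\[
\mathcal{H}^n(\Omega\cap\partial^*\widehat E)\ \le\ \mathcal{H}^n(\Omega\cap\partial^*E)+\mathcal{H}^n(\Omega\cap\partial^*V)\ \le\ \mathcal{F}(K,E)+\delta',
\]
while $|\widehat E\,\Delta\,E|=|V\setminus E|$ is small.

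\smallskip
\noindent\emph{Building $V$.} This is the one delicate point in the construction: the naive choice $V=U_\rho(\Sigma)$ (Euclidean $\rho$-neighborhood) fails, since $\Sigma$ is only rectifiable, so $|U_\rho(\Sigma)|$ need not tend to $0$ and $\mathcal{H}^n(\partial U_\rho(\Sigma))$ need not tend to $2\,\mathcal{H}^n(\Sigma)$ (it is governed by $\overline\Sigma$ and by the ``gap structure'' of $\Sigma$ at small scales). To get around this I would first use that $\mathcal{C}$-spanning is insensitive to $\mathcal{H}^n$-null modifications of the spanning set (immediate from Definition \ref{def homot span borel}), and by inner regularity write $\Sigma$, up to an $\mathcal{H}^n$-null set, as a disjoint union $\bigcup_k C_k$ of compact sets, each contained in a $C^1$ graph $M_k$. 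By compactness of $C_k$ and the very definition of Lebesgue measure, $C_k$ can be covered by a \emph{finite} union $O_k$ of coordinate cubes in a chart for $M_k$, with the portion of $M_k$ lying over $O_k$ of $\mathcal{H}^n$-measure $\le\mathcal{H}^n(C_k)+2^{-k}\delta'$; one then takes $V_k\subset\Omega$ to be the thin vertical slab of half-width $\rho_k$ about $M_k$ over $O_k$, so that $|V_k|\le C(M_k,O_k)\,\rho_k$ and $\mathcal{H}^n(\partial V_k)\le2\,\mathcal{H}^n(M_k\cap(O_k\times\mathbb{R}))+C(M_k,O_k)\,\rho_k$, the error being carried by the $(n-1)$-dimensional $\partial O_k$. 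Taking $\rho_k\downarrow0$ fast, $V:=\bigcup_kV_k$ is open, contained in $\Omega$, contains $\Sigma$ (up to an $\mathcal{H}^n$-null set), has small volume, and satisfies $\mathcal{H}^n(\Omega\cap\partial^*V)\le\sum_k\mathcal{H}^n(\partial V_k)\le2\,\mathcal{H}^n(\Sigma)+\delta'$.

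\smallskip
\noindent\emph{Fixing the volume.} Since typically $|\widehat E|=|E|+|V\setminus E|>|E|$, I would restore $|E'|=|E|$ by a \emph{localized} modification of negligible perimeter cost, e.g.\ uniformly rescaling the slab widths $\rho_k\mapsto\lambda\rho_k$ and using that $\lambda\mapsto\big|E\cup\bigcup_kV_k^\lambda\big|$ decreases --- continuously off a countable set of $\lambda$'s (the $\partial V_k^\lambda$ being disjoint for distinct $\lambda$, plus dominated convergence) --- from $|\widehat E|$ down toward $|E|$, finishing, if necessary, with a small outward bulge of a single slab face. The resulting $E'$ is again of the form $E\cup V'$ with $V'\subset\Omega$ open and $\Sigma\subset V'$ (up to $\mathcal{H}^n$-null), and satisfies $|E'|=|E|$, $|E'\Delta E|\le\delta$, and $\mathcal{H}^n(\Omega\cap\partial^*E')\le\mathcal{F}(K,E)+\delta$ once $\delta'$ is chosen small. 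Throughout this step one uses that $\ell<\infty$ forces every $\gamma\in\mathcal{C}$ to be essential in $\Omega$ (otherwise $\mathcal{C}$ would contain arbitrarily small contractible loops, and no set of finite $\mathcal{H}^n$-measure could be $\mathcal{C}$-spanning), so that no tube in $\mathcal{T}(\mathcal{C})$ lies inside a small ball and localized surgeries cannot destroy spanning.

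\smallskip
\noindent\emph{Transferring the spanning condition; the main obstacle.} In the bulk case ($\Sigma=K\cap E^\zero$) this is short: from $\Sigma\subset V'^\one$ one gets $K\cap(E')^\zero=K\cap E^\zero\cap V'^\zero=\varnothing$ up to $\mathcal{H}^n$-null sets, hence $K\cup E^\one\shn(E')^\one\cup(\Omega\cap\partial^*E')$; and $\mathcal{C}$-spanning is monotone under $\mathcal{H}^n$-enlargement of the spanning set (a given essential disconnection of a tube witnesses spanning for any larger set, again directly from Definition \ref{def homot span borel}), which gives the conclusion. The boundary case ($\Sigma=K\setminus\partial^*E\subset\widehat E^\one$) is the heart of the matter and the step I expect to be the main obstacle: $\Sigma$ now lies buried in the interior of the new wet region, so monotonicity is useless and one must argue slice by slice. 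Fixing $(\gamma,\Phi,T)\in\mathcal{T}(\mathcal{C})$, for $\mathcal{H}^1$-a.e.\ $s$ and $\mathcal{H}^n$-a.e.\ $x\in T[s]$ one starts from a partition $\{T_1,T_2\}$ of $T$ with $x\in\partial^eT_1\cap\partial^eT_2$ essentially disconnected by $K\cup T[s]$, and performs surgery on it so that the ``wall'' $V'\cap T$ is absorbed entirely into one of the two pieces. Then $\Sigma$ disappears from the new interface (it is interior to $V'$), its separating role being taken over by the two faces of $V'$, which lie in $\partial^*E'$; and the part of the old interface coming from $K\cap\partial^*E$ is essentially unaffected provided one also arranges, by choosing the slab widths $\rho_k$ to be ``good'' values of the relevant distance functions (the faces of the $V_k$ foliating as $\rho_k$ varies, while $\mathcal{H}^n(\Omega\cap\partial^*E)<\infty$), that $\partial^*E$ meets $\partial V'$ in an $\mathcal{H}^n$-null set. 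Making this surgery precise --- checking that the modified pieces genuinely partition $T$, that $x$ remains on both essential boundaries, and that the new interface is $\mathcal{H}^n$-contained in $(\Omega\cap\partial^*E')\cup T[s]$ --- is the technical core, and it is exactly here that the structure of $V'$ as a locally finite union of thin slabs over $C^1$ pieces is used.
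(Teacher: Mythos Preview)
Your bulk case argument is essentially the paper's: cover $\mathcal{R}(K)\cap E^{(0)}$ by thin slabs $V$, set $E'=E\cup V$, and conclude by monotonicity of spanning under $\mathcal{H}^n$-enlargement. The construction of $V$ via compact pieces of Lipschitz graphs thickened into cylindrical slabs is also the paper's route.

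The boundary case has a genuine gap. Your surgery---absorb $V'\cap T$ entirely into one of the two pieces $\{T_1,T_2\}$---fails precisely for the $x\in T[s]$ that lie in $(V')^{(1)}$: after absorption such an $x$ has density $1$ in the absorbing piece, so it cannot remain in $\partial^e T_1'\cap\partial^e T_2'$. For these $x$ one needs an entirely different partition, and this is where $\ell<\infty$ enters in a way you have not supplied. The paper's key step (its ``Claim two'') shows that if $D$ is an open set of finite perimeter contained in a ball $B\subset\Omega$, then for $\mathcal{H}^n$-a.e.\ $x\in T[s]\cap D$ the set $\partial^*D\cup T[s]$ already essentially disconnects $T$ with $x$ on the interface. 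The proof is a Brouwer-degree argument: were it to fail, one could build a smooth loop in $T\cap B$ homotopic in $\Omega$ to $\gamma$, contradicting that $\gamma$ is essential (which is forced by $\ell<\infty$). Your invocation of $\ell<\infty$ (``no tube lies in a small ball, so localized surgeries cannot destroy spanning'') is in the right spirit but is placed in the volume-fixing step and does not furnish this argument.

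Two further structural differences matter. First, the paper takes $E'=E\,\Delta\,F$ rather than $E\cup V$, having arranged $\mathcal{H}^n(\partial^*E\cap\partial^*F)=0$ so that $\partial^*(E\,\Delta\,F)\ehn\partial^*E\cup\partial^*F$; with your $E\cup V'$ both $\partial^*E\cap (V')^{(1)}$ and $\partial^*V'\cap E^{(1)}$ are erased from $\partial^*E'$, and your surgery would then also have to show these losses are harmless. Second, the paper does not build $V$ in one shot: it iterates, at stage $k$ covering half of the remaining $\mathcal{R}(K)\setminus(\partial^*E\cup F_{k-1})$ by slabs each compactly contained in a ball in $\Omega$, verifies spanning of $(\Omega\cap\partial^*E)\cup\partial F_k\cup[\mathcal{R}(K)\setminus(F_k\cup\partial^*E)]$ at each finite stage using the essential-partition criterion and the degree argument above, and only then passes to the limit via the closure theorem for spanning pairs. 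This staging is what makes the spanning verification tractable.
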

\noindent A consequence is that collapsed and non-collapsed formulations of the soap film capillarity problem are equivalent, and that limits of minimizing sequences for $\psi_{\rm bd}(v)$ or $\psi_{\rm bk}(v)$ are, up to resolving possible volume loss at infinity, minimizers for $\Psi_{\rm bd}(v)$ or $\Psi_{\rm bk}(v)$, respectively.

\begin{theorem}[Equivalence of collapsed and non-collapsed problems]\label{thm main of minimality}
   If $\wire\subset\mathbb{R}^{n+1}$ is closed and $\mathcal{C}$ is a spanning class for $\wire$ with $\ell<\infty$, then $\Psi_{\rm bd}(v)=\psi_{\rm bd}(v)$ and $\Psi_{\rm bk}(v)=\psi_{\rm bk}(v)$. Furthermore, if in addition $\wire$ is compact and $\{E_j\}_j$ is a minimizing sequence of finite perimeter sets for $\psi_{\rm bd}(v)$ or $\psi_{\rm bk}(v)$, then up to a subsequence, there exists $(K,E)\in \mathcal{K}_{\rm B}$ such that $E_j\to E$ locally in $\mathbb{R}^{n+1}$, 
\begin{align}\notag
    \mathcal{H}^n\mres(\Omega \cap \partial^* E_j) &\weakstar \mathcal{H}^n\mres(\Omega \cap \partial^* E) + 2\mathcal{H}^n\mres (K \setminus \partial^* E)\quad\mbox{or} \\ \notag
    \mathcal{H}^n\mres(\Omega \cap \partial^* E_j) &\weakstar \mathcal{H}^n\mres(\Omega \cap \partial^* E) + 2\mathcal{H}^n\mres (K \cap E^\zero),
\end{align}
and there exists $B_r(x)\subset(K \cup E \cup \wire)^c$ such that $|B_r(x)| = v - |E|$ and $(K \cup \pa B_r(x),E \cup B_r(x))$ is minimal for $\Psi_{\rm bd}(v)$ or $\Psi_{\rm bk}(v)$, respectively.
\end{theorem}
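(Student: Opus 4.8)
The plan is to derive Theorem~\ref{thm main of minimality} from the approximation result Theorem~\ref{thm main of approximation} together with the compactness and structure theory for soap films developed in \cite{KMS1,MNR1}. I would first establish the identities $\Psi_{\rm bd}(v)=\psi_{\rm bd}(v)$ and $\Psi_{\rm bk}(v)=\psi_{\rm bk}(v)$, and then read off the statement about minimizing sequences.

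For the equivalences, the inequalities $\Psi_{\rm bd}(v)\le\psi_{\rm bd}(v)$ and $\Psi_{\rm bk}(v)\le\psi_{\rm bk}(v)$ are immediate, since a finite-perimeter $E\subset\Omega$ admissible for $\psi_{\rm bd}(v)$ (resp.\ $\psi_{\rm bk}(v)$) gives $(\Omega\cap\partial^*E,E)\in\mathcal{K}_{\rm B}$, admissible for $\Psi_{\rm bd}(v)$ (resp.\ $\Psi_{\rm bk}(v)$) with the same energy $P(E;\Omega)$ because $(\Omega\cap\partial^*E)\setminus\partial^*E=\varnothing$ and $(\Omega\cap\partial^*E)\cap E^{\zero}=\varnothing$. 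For the reverse inequalities I would take $(K,E)\in\mathcal{K}_{\rm B}$ admissible for the relaxed problem with $\mathcal{F}(K,E)<\infty$ (otherwise there is nothing to prove), arrange $\mathcal{H}^n(K)<\infty$, and apply Theorem~\ref{thm main of approximation}: for every $\delta>0$ it yields a finite-perimeter $E'\subset\Omega$ with $|E'|=|E|=v$, $\mathcal{H}^n(\Omega\cap\partial^*E')\le\mathcal{F}(K,E)+\delta$, and $\Omega\cap\partial^*E'$ (resp.\ $(E')^\one\cup(\Omega\cap\partial^*E')$) $\mathcal{C}$-spanning $\wire$, so $E'$ is admissible for $\psi_{\rm bd}(v)$ (resp.\ $\psi_{\rm bk}(v)$); thus $\psi\le\mathcal{F}(K,E)+\delta$, and letting $\delta\downarrow0$ and infimizing over $(K,E)$ gives $\psi\le\Psi$. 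To arrange $\mathcal{H}^n(K)<\infty$: in the boundary case it is automatic, because $(K,E)\in\mathcal{K}_{\rm B}$ forces $\mathcal{H}^n(K\cap\partial^*E)=\mathcal{H}^n(\Omega\cap\partial^*E)$, hence $\mathcal{F}_{\rm bd}(K,E)=\mathcal{H}^n(K)+\mathcal{H}^n(K\setminus\partial^*E)\ge\mathcal{H}^n(K)$; in the bulk case I would replace $K$ by $K':=K\setminus E^\one$, which keeps $(K',E)\in\mathcal{K}_{\rm B}$ and leaves both the spanning set $E^\one\cup K$ and the energy $\mathcal{F}_{\rm bk}$ unchanged, while giving $\mathcal{H}^n(K')\le\mathcal{H}^n(K\cap E^\zero)+\mathcal{H}^n(\Omega\cap\partial^*E)\le\tfrac32\mathcal{F}_{\rm bk}(K,E)<\infty$ by Federer's theorem $\mathcal{H}^n(\partial^eE\setminus\partial^*E)=0$.

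For the final assertion, let $\wire$ be compact and $\{E_j\}_j$ a minimizing sequence of finite-perimeter sets for $\psi_{\rm bd}(v)$, say. The pairs $(\Omega\cap\partial^*E_j,E_j)$ form an energy-bounded sequence in $\mathcal{K}_{\rm B}$, so the compactness and structure theory of \cite{KMS1,MNR1} (the role of Definition~\ref{def homot span borel}, from \cite{MNR1}, being precisely to transfer spanning to energy-bounded limits) furnishes a subsequence along which $E_j\to E$ locally in $\mathbb{R}^{n+1}$, $\mathcal{H}^n\mres(\Omega\cap\partial^*E_j)\weakstar\mathcal{H}^n\mres(\Omega\cap\partial^*E)+2\mathcal{H}^n\mres(K\setminus\partial^*E)$ for some $(K,E)\in\mathcal{K}_{\rm B}$ with $K$ (resp.\ $E^\one\cup K$, and with $K\cap E^\zero$ in place of $K\setminus\partial^*E$ in the bulk case) $\mathcal{C}$-spanning $\wire$, and a ball $B_r(x)\subset(K\cup E\cup\wire)^c$ with $|B_r(x)|=v-|E|$ such that, the volume escaping to infinity being recovered through the Euclidean isoperimetric inequality as in \cite{KMS1}, $\mathcal{F}_{\rm bd}(K\cup\partial B_r(x),E\cup B_r(x))\le\liminf_j P(E_j;\Omega)=\psi_{\rm bd}(v)$. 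Since this augmented pair is admissible for $\Psi_{\rm bd}(v)$, its energy is at least $\Psi_{\rm bd}(v)$, and combining with $\Psi_{\rm bd}(v)=\psi_{\rm bd}(v)$ from the previous step shows it is a minimizer of $\Psi_{\rm bd}(v)$; the bulk case is identical.

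The substantive work is entirely contained in Theorem~\ref{thm main of approximation}; modulo that result the argument above is bookkeeping. The point I would watch is that the compactness/structure dichotomy quoted in the last paragraph---limiting boundary measure of multiplicity at most two, persistence of $\mathcal{C}$-spanning under energy-bounded convergence, and recovery of escaped volume by a single optimal ball---is available for minimizing sequences of the finite-perimeter problems \eqref{soap film capillarity model}--\eqref{soap film capillarity model bulk} with the same proofs as in \cite{KMS1,MNR1}; this, together with the $\mathcal{H}^n(K)<\infty$ reduction in the bulk case, is the only non-routine item.
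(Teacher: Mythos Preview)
Your proposal is correct and follows the same route as the paper: deduce $\Psi=\psi$ from Theorem~\ref{thm main of approximation}, then invoke the compactness Theorem~\ref{theorem collapsed capillarity compactness} and the volume-recovery arguments of \cite{KMS1,MNR1} for the minimizing-sequence statement. Your explicit reduction to $\mathcal{H}^n(K)<\infty$ in the bulk case (via $K\setminus E^{\one}$) is in fact more careful than the paper, which leaves this implicit, and the boundary-case structure item you flag is precisely where the paper inserts a short first-variation and monotonicity argument to show $\spt K$ is bounded before placing the ball $B_r(x)$.
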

\begin{remark}[Loss of volume at infinity]
The presence of the ball $B_r(x)$ accounts for possible volume loss at infinity for a minimizing sequence. In the bulk problem $\Psi_{\rm bk}(v)$, this cannot happen for a minimizing sequence \cite[Appendix B]{MNR1}, and we expect the same to hold for $\Psi_{\rm bd}(v)$.
\end{remark}

One benefit of the complete variational theory for $\psi_{\rm bd}(v)$ and $\psi_{\rm bk}(v)$ provided by Theorem \ref{thm main of minimality} is in the study wetting and collapsing, which are expected whenever the minimizer for Plateau's problem $\ell$ admits a minimizer with Plateau-type singularities (as recently shown to be optimal in a capillarity model for planar soap bubbles \cite[Theorem 1.6]{MN23}; see also \cite{HM,BrakkeMorgan,Brakke05}). In particular, Theorem \ref{thm main of minimality} shows that the limiting pair $(K,E)$ for a minimizing sequence $\{E_j\}_j$ of $\psi_{\rm bd}(v)$ or $\psi_{\rm bk}(v)$ is a proper minimizer in the relaxed problem $\Psi_{\rm bd}(v)$ or $\Psi_{\rm bk}(v)$, respectively. As a consequence, the regularity statements proved for the bulk problem $\Psi_{\rm bk}(v)$ in \cite{MNR1} automatically hold for $(K,E)$ when $\{E_j\}_j$ are admissible for $\psi_{\rm bk}(v)$. In the boundary spanning problem, Theorem \ref{thm main of approximation} provides an alternative to the approximation arguments used in \cite{KMS2,KMS3} to study the Lagrange multipliers and singular sets of generalized minimizers in the presence of collapsing.

\subsection{Acknowledgments.} The author is grateful to Francesco Maggi and Daniel Restrepo for many fruitful discussions on these problems {\color{black}and to the editor and anonymous referees for their careful reading and comments}. 

\section{Notation and Preliminaries}\label{sec:prelim}

\subsection{Notation} As mentioned in the introduction, we will use the notation $A\overset{\mathcal{H}^k}{\subset} B$ to signify that $\mathcal{H}^k(A \setminus B)=0$. Similarly, $A \overset{\mathcal{H}^k}{=} B$ means that $\mathcal{H}^k(A \Delta B)=0$. For a Radon measure $\mu$, the $k$-dimensional lower density of $\mu$ at a point $x$, denoted by $\theta_*^k(\mu)(x)$, is computed via
\begin{align}\notag
    \theta_*^k(\mu)(x) = \liminf_{r\to 0^+}\frac{\mu(\cl B_r(x))}{\omega_k r^k}\,,
\end{align}
where $\omega_k$ is the $k$-dimensional volume of the unit ball in $\mathbb{R}^k$. A Borel set $S\subset \mathbb{R}^{n+1}$ is locally $\mathcal{H}^k$-rectifiable if it can be covered up to an $\mathcal{H}^k$-null set by a countable union of Lipschitz images of maps from $\mathbb{R}^k$ to $\mathbb{R}^{n+1}$. Lastly, for a locally $\mathcal{H}^k$-finite set $S$, we use the notation $\mathcal{R}(S)$ to denote the (locally) $\mathcal{H}^k$-rectifiable part of $S$, where, following \cite[13.1]{Simon}, any locally $\mathcal{H}^k$-finite set $S$ can be uniquely partitioned up to $\mathcal{H}^k$-null sets into a locally $\mathcal{H}^k$-rectifiable portion $\mathcal{R}(S)$ and a purely $\mathcal{H}^k$-unrectifiable portion $\mathcal{P}(S)$. The definition and properties of $\mathcal{P}(S)$ will not be relevant here.

\subsection{Preliminaries}
In this section we quote several results from \cite{MNR1}. First we have a fact showing that the rectifiable part of a $\mathcal{H}^n$-finite $\mathcal{C}$-spanning set is itself $\mathcal{C}$-spanning and the compactness theorem mentioned in the introduction. 

\begin{proposition}\cite[Lemma 2.2]{MNR1}\label{rect part is spanning prop}
    If $\wire\subset \mathbb{R}^{n+1}$ is closed, $\mathcal{C}$ is a spanning class for $\wire$, $S$ is $
    \mathcal{C}$-spanning $\wire$, and $\mathcal{H}^n\mres S$ is a Radon measure in $\Omega$, then $\mathcal{R}(S)$ is $\mathcal{C}$-spanning $\wire$. Moreover, the sets $T_1$ and and $T_2$ appearing in Definition \ref{def homot span borel} are sets of finite perimeter.
\end{proposition}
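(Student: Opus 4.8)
The plan is to reduce the spanning statement for $\mathcal{R}(S)$ to the \emph{Moreover} clause: once the partition sets $\{T_1,T_2\}$ witnessing Definition \ref{def homot span borel} are known to have finite perimeter, the essential interface between them inside $T$ is $\mathcal{H}^n$-rectifiable, and a rectifiable subset of $S$ cannot meet the purely unrectifiable part $\mathcal{P}(S)$ in a set of positive $\mathcal{H}^n$-measure; hence the \emph{same} partition already disconnects $T$ relative to $\mathcal{R}(S)\cup T[s]$, which is all that is required.

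\emph{Step 1: the partition sets have finite perimeter.} Fix $(\gamma,\Phi,T)\in\mathcal{T}(\mathcal{C})$, a slice $s$, a point $x\in T[s]$, and a partition $\{T_1,T_2\}$ of $T$ as in Definition \ref{def homot span borel}, so that in particular
\[
T^{(1)}\cap\partial^e T_1\cap\partial^e T_2\shn S\cup T[s]\,.
\]
Since $\Phi$ is a diffeomorphism of $\mathbb{S}^1\times\cl B_1^n$ into $\Omega$, $\cl T$ is a compact subset of $\Omega$, $T$ is open with smooth compact boundary (so $\partial^e T=\partial T$ and $\mathcal{H}^n(\partial T)<\infty$), and $\cl{T[s]}$ is a compact piece of a smooth hypersurface, so $\mathcal{H}^n(T[s])<\infty$; moreover $\mathcal{H}^n\mres S$ being a Radon measure on $\Omega$ gives $\mathcal{H}^n(S\cap\cl T)<\infty$. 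I then use two elementary density facts: because $T_1\subset T$ up to an $\mathcal{L}^{n+1}$-null set, $\partial^e T_1\cap T^{(0)}=\varnothing$; and because $\{T_1,T_2\}$ partitions $T$, at every point of $T^{(1)}$ the densities of $T_1$ and $T_2$ sum to $1$, whence $\partial^e T_1\cap T^{(1)}=\partial^e T_1\cap\partial^e T_2\cap T^{(1)}$. Splitting $\partial^e T_1$ along the partition $T^{(1)}\sqcup T^{(0)}\sqcup\partial^e T$ of $\mathbb{R}^{n+1}$ then gives
\[
\partial^e T_1\subset\big(T^{(1)}\cap\partial^e T_1\cap\partial^e T_2\big)\cup\partial T\,,
\]
whose right-hand side has finite $\mathcal{H}^n$-measure by the above. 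Federer's criterion now yields that $T_1$, and symmetrically $T_2$, are sets of finite perimeter; this is the \emph{Moreover} clause, and it applies to every partition witnessing the spanning of $S$ (and, a posteriori, of $\mathcal{R}(S)$).

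\emph{Step 2: $\mathcal{R}(S)$ is $\mathcal{C}$-spanning.} With $T_1,T_2$ of finite perimeter, Federer's theorem on the essential boundary gives $\partial^e T_i\ehn\partial^* T_i$, so
\[
W:=T^{(1)}\cap\partial^e T_1\cap\partial^e T_2\ \ehn\ T^{(1)}\cap\partial^* T_1\cap\partial^* T_2
\]
is $\mathcal{H}^n$-equivalent to a subset of the rectifiable set $\partial^* T_1$, hence $\mathcal{H}^n$-rectifiable. From $W\shn S\cup T[s]$ I get $W\ehn(W\cap S)\cup(W\cap T[s])$, and $W\cap S$ is a rectifiable subset of $S$, so $W\cap S\shn\mathcal{R}(S)$, since its intersection with $\mathcal{P}(S)$ would be simultaneously rectifiable and purely unrectifiable, hence $\mathcal{H}^n$-null. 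Therefore $W\shn\mathcal{R}(S)\cup T[s]$, and since the Lebesgue-measure conditions $|T\Delta(T_1\cup T_2)|=0$, $|T_1||T_2|>0$ and the membership $x\in\partial^e T_1\cap\partial^e T_2$ are untouched, the very same partition $\{T_1,T_2\}$ shows that $\mathcal{R}(S)\cup T[s]$ essentially disconnects $T$ into $\{T_1,T_2\}$. Running this over the full-measure set of good slices $s$, and within each over the full-$\mathcal{H}^n$-measure set of good $x\in T[s]$ furnished by the spanning of $S$, and over all $(\gamma,\Phi,T)\in\mathcal{T}(\mathcal{C})$, shows that $\mathcal{R}(S)$ satisfies Definition \ref{def homot span borel}.

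The only genuinely substantive point is the density bookkeeping in Step 1 behind the inclusion $\partial^e T_1\subset(T^{(1)}\cap\partial^e T_1\cap\partial^e T_2)\cup\partial T$; everything else is a routine application of Federer's criterion, Federer's structure theorem for the essential boundary, and the defining property of the rectifiable part $\mathcal{R}(\cdot)$. One should also record the minor measurability remark that $\mathcal{R}(S)$ may be taken Borel modulo $\mathcal{H}^n$-null sets, with $\mathcal{R}(S)\subset S\subset\Omega$, so that it is an admissible competitor.
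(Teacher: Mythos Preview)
The paper does not actually prove this proposition: it is quoted verbatim as a preliminary from \cite[Lemma 2.2]{MNR1}, so there is no proof in the paper to compare against. That said, your argument is correct and is essentially the natural one. Federer's criterion applied to the inclusion $\partial^e T_1\subset(T^{(1)}\cap\partial^e T_1\cap\partial^e T_2)\cup\partial T$ (using $\mathcal{H}^n(S\cap\cl T)<\infty$, $\mathcal{H}^n(T[s])<\infty$, $\mathcal{H}^n(\partial T)<\infty$) gives the finite perimeter of $T_1,T_2$; then $\partial^e T_i\ehn\partial^*T_i$ makes the interface rectifiable, and the decomposition $S\ehn\mathcal{R}(S)\cup\mathcal{P}(S)$ forces $W\cap S\shn\mathcal{R}(S)$, so the same partition witnesses the spanning condition for $\mathcal{R}(S)$. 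This is precisely the mechanism one would expect behind the cited lemma.
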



\begin{theorem}\cite[Theorem 1.4]{MNR1}\label{theorem collapsed capillarity compactness} Let $\wire$ be a closed set in $\mathbb{R}^{n+1}$, $\C$ be a spanning class for $\wire$, and $\{(K_j,E_j)\}_j$ be a sequence in $\K_{\rm B}$ such that $\sup_j\,\H^n(K_j)<\infty$, and let a Borel set $E$ and Radon measures $\mu_{\rm bk}$ and 
$\mu_{\rm bd}$ in $\Om$ be such that $E_j\to E$ locally in $L^1(\Om)$ and
\begin{eqnarray}
\label{def of mub}
&&\H^n\mres (\Om\cap\pa^*E_j) + 2\,\H^n \mres (\mathcal{R}(K_j) \cap E_j^\zero) \weakstar \mu_{\rm bk}\,,
\\
\label{def of mu}
&&\H^n\mres (\Om\cap\pa^*E_j) + 2\,\H^n \mres (\mathcal{R}(K_j) \setminus \pa^* E_j) \weakstar \mu_{\rm bd}\,,
\end{eqnarray}
as $j\to\infty$. Then the sets
\begin{eqnarray}
  \label{def of Kb limit}
  K_{\rm bk}\!\! &:=&\!\!\big(\Om\cap\partial^* E\big) \cup \Big\{x\in \Omega \cap E^\zero : \theta^n_*(\mu_{\rm bk})(x)\geq 2 \Big\}\,,
  \\
  \label{def of K limit}
  K_{\rm bd} \!\!&:=&\!\!\big(\Om\cap\partial^* E\big) \cup \Big\{x\in \Omega \setminus\pa^*E : \theta^n_*(\mu_{\rm bd})(x)\geq 2 \Big\}\,,
\end{eqnarray}
are such that $(K_{\rm bk},E),(K_{\rm bd},E)\in\K_{\rm B}$ and
\begin{eqnarray}
  \label{mub limit lb}
  \mu_{\rm bk}\!\!&\ge&\!\! \H^n\mres (\Om\cap\pa^*E) + 2\,\H^n \mres (K_{\rm bk} \cap E^\zero)\,,
  \\
  \label{mu limit lb}
  \mu_{\rm bd}\!\!&\ge&\!\!\H^n\mres (\Om\cap\pa^*E) + 2\,\H^n \mres (K_{\rm bd} \setminus\pa^*E)\,,
\end{eqnarray}
with
\begin{equation}
  \label{lsc Fb}
  \liminf_{j\to\infty}\F_{\rm bk}(K_j,E_j)\ge\F_{\rm bk}(K_{\rm bk},E)\,,\qquad
    \liminf_{j\to\infty}\F_{\rm bd}(K_j,E_j)\ge\F_{\rm bd}(K_{\rm bd},E)\,.
\end{equation}
Finally, if $K_j\cup E_j^\one$ (resp. $K_j$) is $\C$-spanning $\wire$ for every $j$, then $K_{\rm bk}\cup E^\one$ (resp. $K_{\rm bd}$) is $\C$-spanning $\wire$.
\end{theorem}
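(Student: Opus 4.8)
The plan is to dispatch the measure-theoretic conclusions first and to reserve stability of the spanning condition for last. Write $\mu_j^{\rm bk}:=\H^n\mres(\Om\cap\pa^*E_j)+2\,\H^n\mres(\mathcal{R}(K_j)\cap E_j^\zero)$ and $\mu_j^{\rm bd}:=\H^n\mres(\Om\cap\pa^*E_j)+2\,\H^n\mres(\mathcal{R}(K_j)\setminus\pa^*E_j)$ for the two sequences in \eqref{def of mub}--\eqref{def of mu}, so that $\mu_j^{\rm bk}\weakstar\mu_{\rm bk}$ and $\mu_j^{\rm bd}\weakstar\mu_{\rm bd}$ in $\Om$, with $\sup_j\mu_j^{\rm bk}(\Om),\sup_j\mu_j^{\rm bd}(\Om)\le2\sup_j\H^n(K_j)<\infty$. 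Since $\Om\cap\pa^*E_j\shn K_j$, lower semicontinuity of the relative perimeter gives $P(E;F)\le\liminf_jP(E_j;F)\le\sup_j\H^n(K_j)<\infty$ for every $F\cc\Om$; moreover $K_{\rm bk},K_{\rm bd}$ are Borel subsets of $\Om$ (because $x\mapsto\theta^n_*(\mu)(x)$ is Borel) that contain $\Om\cap\pa^*E$ by construction. Hence $(K_{\rm bk},E),(K_{\rm bd},E)\in\K_{\rm B}$.

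\emph{The lower bounds \eqref{mub limit lb}--\eqref{mu limit lb}.} The key density estimate is $\theta^n_*(\mu_{\rm bk})(x),\theta^n_*(\mu_{\rm bd})(x)\ge1$ for $\H^n$-a.e. $x\in\Om\cap\pa^*E$. Indeed, for $x\in\Om$ and $r$ small enough that $\cl B_r(x)\subset\Om$, weak-$*$ convergence on compact sets followed by lower semicontinuity of perimeter give
\[
\mu_{\rm bk}(\cl B_r(x))\ \ge\ \limsup_j\mu_j^{\rm bk}(\cl B_r(x))\ \ge\ \limsup_j\H^n(B_r(x)\cap\pa^*E_j)\ \ge\ P(E;B_r(x))\,,
\]
and dividing by $\omega_nr^n$ and sending $r\to0$ yields $\theta^n_*(\mu_{\rm bk})(x)\ge1$ for $\H^n$-a.e. $x\in\pa^*E$ by rectifiability of $\pa^*E$; the bound for $\mu_{\rm bd}$ is identical. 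By \eqref{def of Kb limit}--\eqref{def of K limit} the relevant lower densities are $\ge2$ on $K_{\rm bk}\cap E^\zero$ and on $K_{\rm bd}\setminus\pa^*E$, sets disjoint from $\pa^*E$ (reduced boundary points have density $\tfrac12$). A standard density comparison using lower densities (cf. \cite{Simon})---namely $\mu(B)\ge t\,\H^n(B\cap A)$ for all Borel $B$ whenever $\theta^n_*(\mu)\ge t$ on a Borel set $A$---applied on $\Om\cap\pa^*E$ with $t=1$ and on the density-$\ge2$ sets with $t=2$, then added, produces \eqref{mub limit lb} and \eqref{mu limit lb}; in particular $\H^n(K_{\rm bk}),\H^n(K_{\rm bd})<\infty$.

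\emph{The energy inequalities \eqref{lsc Fb}.} Since $\mathcal{R}(K_j)\subset K_j$, one has $\F_{\rm bk}(K_j,E_j)\ge\mu_j^{\rm bk}(\Om)$ and $\F_{\rm bd}(K_j,E_j)\ge\mu_j^{\rm bd}(\Om)$, so by weak-$*$ convergence on the open set $\Om$ and \eqref{mub limit lb}, $\liminf_j\F_{\rm bk}(K_j,E_j)\ge\mu_{\rm bk}(\Om)\ge\H^n(\Om\cap\pa^*E)+2\,\H^n(K_{\rm bk}\cap E^\zero)=\F_{\rm bk}(K_{\rm bk},E)$; the same computation with \eqref{mu limit lb} gives $\liminf_j\F_{\rm bd}(K_j,E_j)\ge\F_{\rm bd}(K_{\rm bd},E)$.

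\emph{Stability of the spanning condition} is the heart of the matter and the step I expect to be the main obstacle. By Proposition \ref{rect part is spanning prop}, $K_j$ being $\C$-spanning forces $\mathcal{R}(K_j)$ to be $\C$-spanning, and passing from $K_j$ to $\mathcal{R}(K_j)$ changes neither $\mu_{\rm bk},\mu_{\rm bd}$ nor $K_{\rm bk},K_{\rm bd}$, so I may assume each $K_j$ is $\H^n$-rectifiable. Fix $(\gamma,\Phi,T)\in\T(\C)$. For $\H^1$-a.e. $s\in\mathbb{S}^1$ and $\H^n$-a.e. $x\in T[s]$, Definition \ref{def homot span borel} applied to $K_j$ produces a partition $\{T_1^j,T_2^j\}$ of $T$ (depending on $j,s,x$) with $x\in\pa^eT_1^j\cap\pa^eT_2^j$, with $T^{(1)}\cap\pa^eT_1^j\cap\pa^eT_2^j\shn K_j\cup T[s]$, and---by Proposition \ref{rect part is spanning prop}---with $T_i^j$ of finite perimeter; the containment forces $P(T_1^j;T)\le\H^n(K_j\cap T)+\H^n(T[s])$, bounded uniformly in $j$ for a.e. $s$. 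The technical core, which I would import from \cite{MNR1}, is a measurable selection in $(s,x)$ together with a diagonal extraction in $j$ yielding, for a.e. $s$ and a.e. $x\in T[s]$, a limiting partition $\{T_1,T_2\}$ of $T$ with $T_i^j\to T_i$ in $L^1_{\loc}$, with $x\in\pa^eT_1\cap\pa^eT_2$, and with $|T_1||T_2|>0$ (the nondegeneracy coming from a slice density estimate preventing the $T_i^j$ from emptying out). It then suffices to check $T^{(1)}\cap\pa^eT_1\cap\pa^eT_2\setminus T[s]\shn K_{\rm bd}$, and here the form \eqref{def of K limit} enters: any $\H^n$-positive portion $\Sigma$ of this limiting interface is, by lower semicontinuity of perimeter, $\H^n$-a.e. a unit-density limit of the interface pieces of the $T_i^j$, which lie in $\mathcal{R}(K_j)$; at $\H^n$-a.e. $x\in\Sigma$ either $x\in\Om\cap\pa^*E\subset K_{\rm bd}$, or $x\in E^\zero\cup E^\one\subset\Om\setminus\pa^*E$, and in the latter case a single two-sided sheet of $\pa^*E_j$ at $x$ would force $x\in\pa^*E$, so at least two sheets of $\mathcal{R}(K_j)$ (or of $\pa^*E_j$) accumulate at $x$; since $\mu_{\rm bd}\ge\H^n\mres(\Om\cap\pa^*E_j)+2\,\H^n\mres(\mathcal{R}(K_j)\setminus\pa^*E_j)$, this yields $\theta^n_*(\mu_{\rm bd})(x)\ge2$ and hence $x\in K_{\rm bd}$. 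The bulk case is proved along the same lines with $K_j\cup E_j^\one$ in place of $K_j$ and $K_{\rm bk}\cup E^\one$ in place of $K_{\rm bd}$: the portion of the limiting interface trapped inside the open bulk passes to the limit inside $E^\one$ (as $E_j\to E$ in $L^1$), while the portion lying outside $\pa^*E_j\cup E_j^\one$ lies in $\mathcal{R}(K_j)\cap E_j^\zero$, is weighted $2$ in $\mu_{\rm bk}$, and hence accumulates at points of $E^\zero$ with $\theta^n_*(\mu_{\rm bk})\ge2$, i.e. points of $K_{\rm bk}$ by \eqref{def of Kb limit}; one also uses, as in \cite{MNR1}, that $\C$-spanning of $K_j\cup E_j^\one$ descends to $\mathcal{R}(K_j)\cup E_j^\one$ with disconnecting partitions of locally finite perimeter. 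The principal difficulty throughout is exactly the measurable-selection and nondegeneracy package that upgrades the per-slice, per-point partitions of the $K_j$ into a limiting partition valid along a single subsequence.
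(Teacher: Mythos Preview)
The paper does not prove this statement; it is quoted verbatim from \cite[Theorem 1.4]{MNR1} as a preliminary, so there is no in-paper proof to compare against. That said, the tools the paper \emph{does} quote from \cite{MNR1} (Theorems \ref{theorem decomposition} and \ref{theorem spanning with partition}) make clear what the intended argument looks like, and your sketch departs from it in a way that creates a genuine gap.

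Your treatment of the measure-theoretic conclusions---membership in $\K_{\rm B}$, the density bounds giving \eqref{mub limit lb}--\eqref{mu limit lb}, and the energy lower semicontinuity \eqref{lsc Fb}---is correct and standard.

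The gap is in the spanning stability. You work directly with Definition \ref{def homot span borel}: for each $(s,x,j)$ you pull a partition $\{T_1^j,T_2^j\}$, and then propose a ``measurable selection in $(s,x)$ together with a diagonal extraction in $j$'' to produce a limiting partition for a.e.\ $(s,x)$. This cannot work as stated: compactness for sets of finite perimeter gives a subsequence depending on the particular sequence $\{T_1^j(s,x)\}_j$, so there is no single subsequence of $j$'s along which the limit exists for a.e.\ $(s,x)$ simultaneously, and no measurable-selection theorem manufactures one. The nondegeneracy $|T_1||T_2|>0$ and the membership $x\in\pa^eT_1\cap\pa^eT_2$ are similarly unstable under $L^1$ limits without additional structure. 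The machinery in \cite{MNR1}, visible in this paper through Theorems \ref{theorem decomposition} and \ref{theorem spanning with partition}, avoids the $x$-dependence entirely: for each $j$ and each $s$ there is a \emph{single} essential partition $\{U_i^j\}_i$ of $T$ induced by $\mathcal{R}(K_j)\cup T[s]$ (resp.\ by $\mathcal{R}(K_j)\cup E_j^\one\cup T[s]$), and spanning is equivalent to the slice condition $T[s]\cap E_j^\zero\shn\cup_i\pa^*U_i^j$. One then passes to the limit on these partitions (indexed only by $j$ and $s$), using the uniform perimeter bound \eqref{essential partition perimeter bound}, and checks that the limiting essential partition induced by $K_{\rm bd}\cup T[s]$ (resp.\ $K_{\rm bk}\cup E^\one\cup T[s]$) still captures $T[s]$. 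Your heuristic that ``two sheets accumulate, hence $\theta^n_*\ge2$'' is morally the right endgame, but it needs to be run against the essential-partition interfaces $\cup_i\pa^*U_i^j$, not against per-point disconnecting sets.
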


In addition to Theorem \ref{theorem collapsed capillarity compactness}, we will also use some further tools originating from its proof, for which we will need some terminology. If $U\subset\R^{n+1}$ is Lebesgue measurable, $\{U_i\}_i$ is a {\bf Lebesgue-partition of $U$} if $\{U_i\}_i$ is a finite or countable Borel partition of $U\setminus U'$ where $|U'|=0$.
Two Lebesgue-partitions $\{U_i\}_i$ and $\{\tilde{U}_j\}_j$ of $U$ are {\bf Lebesgue-equivalent} if there is a bijection $\s$ such that $|U_i\Delta \tilde{U}_{\s(i)}|=0$ for every $i$. If $S$ is a Borel set, a Lebesgue partition $\{U_i\}_i$ of $U$ {\bf induced by $S$} must satisfy
\begin{equation}
  \label{induced partition}
  \mbox{${\color{black}U^\one}\cap\pa^eU_i\shn S$}\qquad\forall i\,.
\end{equation}
Lastly, $\{U_i\}_i$ is called an {\bf essential partition of $U$ induced by $S$} if it is a Lebesgue partition of $U$ induced by $S$ and, for every $i$, $S$ does not essentially disconnect $U_i$.

\begin{theorem}\cite[Theorem 2.1]{MNR1}\label{theorem decomposition}
    If $U$ is {\color{black}a bounded set of finite perimeter}, and if $S$ is a Borel set with $\H^n(S \cap {\color{black}U^\one})<\infty$, then there exists an essential partition $\{U_i\}_{i\in I}$ of $U$ induced by $S$ such that each $U_i$ is a set of finite perimeter and
    \begin{equation}
      \label{essential partition perimeter bound}
          \sum_{i\in I}P(U_i;{\color{black}U^\one}) \leq 2\,\H^n(S \cap {\color{black}U^\one})\,.
    \end{equation}
    Moreover: 
    {\bf (a):} if $S^*$ is a Borel set with $\H^n(S^* \cap {\color{black}U^\one})<\infty$, $S^*$ is $\H^n$-contained in $S$, $\{U_j^*\}_{j\in J}$ is a Lebesgue partition of $U$ induced by $S^*$, and $\{U_i\}_i$ is an essential partition of $U$ induced by $S$, then
    \begin{equation}
      \label{essential partitions are monotone}
      \bigcup_{j\in J}\pa^*U^*_j\shn\bigcup_{i\in I}\pa^*U_i\,;
    \end{equation}
    {\bf (b):} if $S$ and $S^*$ are $\H^n$-finite sets in ${\color{black}U^\one}$, and either $S^*=\RR(S)$ or $S^*$ is $\H^n$-equivalent to $S$, then $S$ and $S^*$ induce $\mathcal{L}^{n+1}$-equivalent essential partitions of $U$.
\end{theorem}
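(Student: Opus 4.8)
The plan is to view Theorem~\ref{theorem decomposition} as the analogue, relativized so that cutting is permitted only along $S$, of the classical decomposition of a set of finite perimeter into its indecomposable components (Ambrosio--Caselles--Masnou--Morel, as exploited in \cite{CCDPMSteiner,CCDPM17}). Since $U$ is bounded of finite perimeter, $|U|<\infty$, so every Lebesgue partition of $U$ has at most countably many pieces of positive measure. The cornerstone is an a priori estimate valid for \emph{every} Lebesgue partition $\{U_i\}_i$ of $U$ induced by $S$: each $U_i$ has finite perimeter and $\sum_i P(U_i;U^\one)\le 2\,\H^n(S\cap U^\one)$. For the first assertion, the inclusion $U_i\subset U$ forces $\pa^e U_i\subset U^\one\cup\pa^e U$, so by \eqref{induced partition} and Federer's criterion $\H^n(\pa^e U_i)\le\H^n(S\cap U^\one)+P(U)<\infty$. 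For the bound, one combines (i) $\pa^*U_i\cap U^\one\subset\pa^eU_i\cap U^\one\shn S$, which is \eqref{induced partition}, with (ii) the elementary fact that for $\H^n$-a.e.\ $x$ at most two indices $i$ satisfy $x\in\pa^*U_i$ (each such $U_i$ has density $1/2$ at $x$, and three pairwise disjoint sets cannot all have density $1/2$ at a single point); writing $P(U_i;U^\one)=\H^n(\pa^*U_i\cap U^\one)$ and summing then gives $\sum_i P(U_i;U^\one)=\int_{U^\one}\#\{i:x\in\pa^*U_i\}\,\di\H^n\le 2\,\H^n\bigl(U^\one\cap\bigcup_i\pa^*U_i\bigr)\le 2\,\H^n(S\cap U^\one)$.

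Once existence is in hand, the rest of the theorem is soft. I claim that \emph{any} essential partition $\{U_i\}$ of $U$ induced by $S$ refines, up to $\mathcal{L}^{n+1}$-null sets, \emph{every} Lebesgue partition $\{V_k\}$ of $U$ induced by $S$. Indeed, fix $i$ and suppose two of the sets $U_i\cap V_k$, say $A=U_i\cap V_{k_0}$ and $B=U_i\setminus V_{k_0}$, both had positive measure. From $\pa^e A\subset\pa^e U_i\cup\pa^e V_{k_0}$ and $U_i^\one\cap\pa^e U_i=\varnothing$ one gets $U_i^\one\cap\pa^e A\subset U_i^\one\cap\pa^e V_{k_0}\subset U^\one\cap\pa^e V_{k_0}\shn S$, so $S$ would essentially disconnect $U_i$ into $\{A,B\}$, contradicting that $\{U_i\}$ is essential; hence each $U_i$ lies, mod null, in a single $V_k$. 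Three consequences are immediate: the essential partition induced by $S$ is unique up to $\mathcal{L}^{n+1}$-equivalence; relation \eqref{essential partitions are monotone} holds, since when $S^*\shn S$ any partition $\{U_j^*\}$ induced by $S^*$ is a fortiori induced by $S$, hence $\mathcal{L}^{n+1}$-refined by $\{U_i\}$, so each $U_j^*$ is a union of pieces $U_i$ and $\pa^*U_j^*\shn\bigcup_i\pa^*U_i$; and part (b) holds, because passing from $S$ to an $\H^n$-equivalent set leaves the notions ``induced by $S$'' and ``essentially disconnected by $S$'' unchanged, and so does passing from $S$ to $\mathcal{R}(S)$ --- the sets $U^\one\cap\pa^e U_i$ and the interfaces $V^\one\cap\pa^e V_1\cap\pa^e V_2$ appearing in those notions are $\H^n$-rectifiable, and a rectifiable set $\H^n$-contained in $S$ is automatically $\H^n$-contained in $\mathcal{R}(S)$, since it meets $\mathcal{P}(S)$ in an $\H^n$-null set.

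It remains to construct an essential partition induced by $S$, which I expect to be the main obstacle. The scheme is the iterative one behind the indecomposable-components theorem: starting from $\{U\}$ (induced by $S$ because $U^\one\cap\pa^e U=\varnothing$), one repeatedly replaces any current piece $V$ that is essentially disconnected by $S$ into $\{V_1,V_2\}$ by the pair $V_1,V_2$. Each such split preserves inducedness: $\pa^e V_1\setminus V^\one\subset\pa^e V$, while inside $V^\one$ the sets $V_1$ and $V_2$ are complementary, whence $V^\one\cap\pa^e V_1=V^\one\cap\pa^e V_1\cap\pa^e V_2\shn S$, and therefore $U^\one\cap\pa^e V_1\shn S$. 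One organizes the iteration along a countable exhaustion and forms the ``limit partition'' whose pieces are the (at most countably many) positive-measure atoms of the join of all partitions produced. \emph{The delicate step is to show that this limit is again induced by $S$ and that none of its pieces is essentially disconnected by $S$} --- i.e.\ that the cutting procedure genuinely stabilizes rather than leaking reduced boundary in the limit or leaving an undetected disconnection; this is handled exactly as in the Ambrosio--Caselles--Masnou--Morel argument, with the finiteness of the ``perimeter budget'' $2\,\H^n(S\cap U^\one)$ playing the role that $P(U)<\infty$ plays there. The perimeter bound of the first paragraph then holds for the resulting partition automatically.
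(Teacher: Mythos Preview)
This theorem is not proved in the paper; it is quoted without proof from \cite[Theorem 2.1]{MNR1} as a preliminary result, so there is no argument here against which to compare your attempt.

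Your outline is sound and follows the natural route (which is also the one taken in \cite{MNR1}): the a priori bound $\sum_i P(U_i;U^\one)\le 2\,\mathcal{H}^n(S\cap U^\one)$ for \emph{any} induced Lebesgue partition, the refinement lemma yielding uniqueness and parts (a)--(b), and existence via an ACMM-type iteration controlled by the perimeter budget. Two small points worth tightening. In (a), the passage from ``$U_j^*$ is an $\mathcal{L}^{n+1}$-union of pieces $U_i$'' to ``$\pa^*U_j^*\shn\bigcup_i\pa^*U_i$'' uses $\sum_iP(U_i)<\infty$ (the full perimeters, not just the relative ones in $U^\one$); this follows from your bound together with the observation that at $\mathcal{H}^n$-a.e.\ $x\in\pa^*U$ at most one $U_i$ can have density $1/2$, so $\sum_i\mathcal{H}^n(\pa^*U_i\setminus U^\one)\le P(U)$. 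In (b), your assertion that the interfaces $V^\one\cap\pa^eV_1\cap\pa^eV_2$ are $\mathcal{H}^n$-rectifiable is correct but not immediate, since in the definition of essential disconnection $V_1,V_2$ are arbitrary Borel sets; one has to observe that the hypothesis $V^\one\cap\pa^eV_1\cap\pa^eV_2\shn S$, combined with $\mathcal{H}^n(S\cap U^\one)<\infty$ and the finite perimeter of $V=U_i$, forces $V_1,V_2$ themselves to have finite perimeter via Federer's criterion (alternatively, for the direction you actually need, $\mathcal{R}(S)\subset S$ already gives that disconnection by $\mathcal{R}(S)$ implies disconnection by $S$). The existence step you correctly flag as the substantive one and defer; it does go through along ACMM lines with $2\,\mathcal{H}^n(S\cap U^\one)$ playing the role of $P(U)$.
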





\begin{theorem}\cite[Theorem 3.1, Remark 3.2]{MNR1}\label{theorem spanning with partition}
If $\wire\subset\R^{n+1}$ is a closed set in $\R^{n+1}$, $\C$ is a spanning class for $\wire$, $K$ is a Borel set locally $\H^n$-finite in $\Om$, $E$ has locally finite perimeter in $\Om$, and $\Om\cap\pa^*E\shn K$, then the set $S=K\cup E^{\one}$ is $\C$-spanning $\wire$ if and only if, for every $(\gamma,\Phi, T)\in \mathcal{T}(\C)$ and $\H^1$-a.e. $s\in\SS^1$, denoting by $\{U_i\}$ the essential partition of $T$ induced by $\RR(K)\cup T[s]$,
\begin{eqnarray}\label{spanning and the S partition equation}
&&\mbox{$T[s]\cap E^\zero\shn\cup_i \partial^* U_i$}\,.
\end{eqnarray}
\end{theorem}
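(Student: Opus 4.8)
The plan is to work one tube at a time. Fix $(\gamma,\Phi,T)\in\T(\C)$; for $\H^1$-a.e.\ $s\in\SS^1$ let $\{U_i\}_i$ denote the essential partition of $T$ induced by $\RR(K)\cup T[s]$ from Theorem~\ref{theorem decomposition}, so each $U_i$ has finite perimeter and $\sum_i P(U_i;T)<\infty$. I would compare these ``chambers'' against the competitor partitions $\{T_1,T_2\}$ realizing the measure-theoretic spanning of $S:=K\cup E^{\one}$. The first observation is a \emph{dichotomy}: since $\Om\cap\pa^*E\shn K$ and $\pa^*E$ is $\H^n$-rectifiable we have $\pa^*E\cap T\shn\RR(K)$, so if some $U_i$ had $|U_i\cap E|\,|U_i\setminus E|>0$ then $\{U_i\cap E,U_i\setminus E\}$ would be a partition of $U_i$ into sets of finite perimeter (Federer's criterion) whose interface in $U_i^{\one}$ is $\H^n$-contained in $\pa^*E$, hence in $\RR(K)\cup T[s]$; this would contradict maximality of the essential partition. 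Thus each $U_i$ is \emph{wet} ($|U_i\setminus E|=0$) or \emph{dry} ($|U_i\cap E|=0$); writing $W:=\bigcup\{U_i:\text{wet}\}$ one gets $|W\Delta(E\cap T)|=0$, $D:=T\setminus W$ satisfies $|D\Delta(T\setminus E)|=0$ and has finite perimeter in $T$, and a short density computation gives $U_i^{\one}\cap E^{\one}=\varnothing$ for dry $U_i$ while $W^{\one}\cap T\subset E^{\one}\subset S$.

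Next I would \emph{localize the spanning requirement to the dry part of each slice}. Since $T[s]\subset T\subset T^{\one}$, every $x\in T[s]\cap E^{\one}$ automatically satisfies the condition of Definition~\ref{def homot span borel}: for a half-space $H$ with $x\in\pa H$ and $|W\cap H|\,|W\setminus H|>0$ the partition $\{W\cap H,\,T\setminus(W\cap H)\}$ has $x$ on the common essential boundary and interface $\H^n$-contained in $(\pa^*E\cup E^{\one})\cap T\subset S$. Similarly $\{W,D\}$ handles $\H^n$-a.e.\ $x\in T[s]\cap\pa^*E$, and $\H^n(T[s]\cap\pa^*E)=0$ for $\H^1$-a.e.\ $s$ by slicing the rectifiable set $\pa^*E\cap T$ along the $\SS^1$-coordinate of $\Phi$. (Degenerate tubes, where $|E\cap T|=0$ or $|T\setminus E|=0$, reduce at once to the $K$-only case.) Hence $S$ is $\C$-spanning $\wire$ if and only if, for every $(\gamma,\Phi,T)$ and $\H^1$-a.e.\ $s$, the requirement of Definition~\ref{def homot span borel} holds at $\H^n$-a.e.\ $x\in T[s]\cap E^{\zero}$; and it suffices to prove the $\H^n$-identity of the set $\{x\in T[s]\cap E^{\zero}:\text{the requirement holds at }x\}$ with $T[s]\cap E^{\zero}\cap\bigcup_i\pa^*U_i$, since full $\H^n$-measure of the former in $T[s]\cap E^{\zero}$ is then exactly the condition $T[s]\cap E^{\zero}\shn\bigcup_i\pa^*U_i$.

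For the inclusion ``$\supseteq$'' I would use the standard structure of partitions into sets of finite perimeter: at $\H^n$-a.e.\ $x\in\bigcup_i\pa^*U_i$ two distinct chambers $U_{i_0}\ne U_{i_1}$ meet with density $\tfrac12$, and if $x\in E^{\zero}$ both are dry. Then $T_1:=U_{i_0}\cup W$ and $T_2:=T\setminus T_1\supseteq U_{i_1}$ have positive volume, $x$ lies on their common essential boundary (because $\theta(W,x)=0$, so $\theta(T_1,x)=\tfrac12$), and $\pa^eT_1\cap T^{\one}$ is $\H^n$-contained in $(\pa^eU_{i_0}\cup\pa^eW)\cap T^{\one}\shn\RR(K)\cup T[s]\subset S\cup T[s]$; so $S\cup T[s]$ essentially disconnects $T$ into $\{T_1,T_2\}$ and the requirement holds at $x$. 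For ``$\subseteq$'', let the requirement hold at $x\in T[s]\cap E^{\zero}\subset T^{\one}$ with partition $\{T_1,T_2\}$, and set $P:=T_1\cap D$. For each dry $U_i$, the partition $\{U_i\cap T_1,U_i\cap T_2\}$ of $U_i$ has interface in $U_i^{\one}$ equal to $U_i^{\one}\cap\pa^eT_1\cap\pa^eT_2$, which is $\H^n$-contained in $U_i^{\one}\cap(S\cup T[s])=U_i^{\one}\cap(K\cup T[s])$ because $U_i^{\one}\cap E^{\one}=\varnothing$, hence in $\RR(K)\cup T[s]$; by maximality of $\{U_i\}$ one of $|U_i\cap T_1|,|U_i\cap T_2|$ vanishes, so $P$ is, mod Lebesgue-null sets, a union of dry chambers, and in particular has finite perimeter (its essential boundary in $T^{\one}$ being $\H^n$-contained in $K\cup T[s]$ once the fat set $E^{\one}$ is discarded). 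Since $\theta(D,x)=1$ and $x\in\pa^eT_1$, also $x\in\pa^eP$. Finally, $\H^n$-a.e.\ point of $T^{\one}$ lies either in some $U_j^{\one}$ or in $\bigcup_i\pa^*U_i$; the first alternative is impossible at $x$, since $x\in U_j^{\one}$ would force $\theta(U_\ell,x)=0$ for $\ell\ne j$ and hence $\theta(P,x)\in\{0,1\}$, contradicting $x\in\pa^eP$. Therefore $x\in\bigcup_i\pa^*U_i$, and ``$\subseteq$'' follows.

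The step I expect to be the main obstacle is ``$\subseteq$'': the witnessing partition $\{T_1,T_2\}$ depends on $x$ and need not be of finite perimeter, so one cannot apply the $\H^n$-a.e.\ coincidence of essential and reduced boundaries at the prescribed point. Restricting to $P=T_1\cap D$ is what removes the uncontrolled fat set $E^{\one}$ from the disconnecting set, making Theorem~\ref{theorem decomposition} and the structure theory for Caccioppoli partitions applicable; the remaining work is bookkeeping with densities, essential boundaries, and Federer's criterion.
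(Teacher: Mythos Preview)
This theorem is quoted in the present paper as a preliminary result from \cite{MNR1} (Theorem~3.1 and Remark~3.2 there); the paper under review does \emph{not} supply its own proof, so there is no in-paper argument to compare your proposal against.

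That said, your sketch is substantially on the right track and captures the key mechanism. The wet/dry dichotomy for the chambers $U_i$ is exactly the point that makes the characterization work: since $\Om\cap\pa^*E\shn\RR(K)$, each $U_i$ is either $\H^n$-a.e.\ contained in $E$ or in its complement, so the ``bulk'' contribution $E^{\one}$ to $S$ is irrelevant inside dry chambers and the spanning test localizes to $T[s]\cap E^{\zero}$. Your treatment of $x\in T[s]\cap E^{\one}$ via $\{W\cap H,\,T\setminus(W\cap H)\}$ is correct once one notes $\pa^e(W\cap H)\cap T\subset (W^{\one}\cup\pa^eW)\cap T\shn E^{\one}\cup\pa^*E\subset S$, as you indicate.

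Two small points deserve tightening. First, in ``$\subseteq$'' you should not worry that $T_1,T_2$ may fail to have finite perimeter: what you actually use is that for a dry $U_i$ the set $U_i^{\one}\cap\pa^eT_1\cap\pa^eT_2$ is $\H^n$-contained in $K\cup T[s]$ (not just $\RR(K)\cup T[s]$), and Theorem~\ref{theorem decomposition}(b) guarantees that the essential partition of $T$ induced by $K\cup T[s]$ coincides with that induced by $\RR(K)\cup T[s]$; hence $K\cup T[s]$ does not essentially disconnect $U_i$ either, and your maximality contradiction goes through without any rectifiability of $\pa^eT_1$. Second, when you invoke ``$\H^n$-a.e.\ point of $T^{\one}$ lies in some $U_j^{\one}$ or in $\bigcup_i\pa^*U_i$'' at the \emph{prescribed} point $x$, you are implicitly using that this exceptional $\H^n$-null set depends only on $\{U_i\}$ (hence on $s$) and not on the $x$-dependent partition $\{T_1,T_2\}$; it is worth saying explicitly that one first discards this null set from $T[s]$ and then runs the pointwise argument on what remains. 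With these clarifications the sketch becomes a complete proof.
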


\begin{remark}\label{trivial partial E containment}
    We point out that for any $(K,E)\in \mathcal{K}_{\rm B}$ and $(\gamma,\Phi,T)\in \mathcal{T}(\mathcal{C})$ and $s\in \mathbb{S}^1$,
    \begin{align}\label{partial E equation}
    \mbox{$\partial^* E\cap T$ is $\mathcal{H}^n$ contained in $\cup_i \partial^* U_i$}\,,
    \end{align}
    where $\{U_i\}$ is the essential partition of $T$ induced by $\mathcal{R}(K) \cup T[s]$. Indeed, $\{E \cap T,T\setminus E\}$ is a Lebesgue partition of $T$ induced by $\mathcal{R}(K) \cup T[s]$ since $T \cap\mathcal{R}(K) \overset{\mathcal{H}^n}{\supset}T \cap \partial^* E$. By Theorem \ref{theorem decomposition}(a), we deduce \eqref{partial E equation}.
\end{remark}

\begin{remark}[Consequences of $\ell<\infty$]\label{no points remark}
If $\mathcal{C}$ is a spanning class for $\wire$ such that $\ell<\infty$, then no $\gamma\in \mathcal{C}$ is homotopic in $\Omega$ to a point - if $\gamma\in \mathcal{C}$ were homotopic to a point, then the only $\mathcal{C}$-spanning set is $\Omega$, which has infinite $\mathcal{H}^n$-measure.
\end{remark}


\section{Proofs of the Approximation and Minimality Theorems}\label{sec:proofs}

\subsection{Proof of Theorem \ref{thm main of approximation}} The proof in the boundary spanning case proceeds in several steps and is based on an iteration procedure. The most delicate part of the iteration is ensuring that the reduced boundary of our approximating set will be $\mathcal{C}$-spanning. Each stage involves ``replacing" a portion of the $\mathcal{H}^n$-rectifiable set $\mathcal{R}(K)\setminus \partial^* E$ with a small set of finite perimeter $\mathcal{D}_i$. By choosing each $\D_i$ carefully to satisfy certain properties, we will be able to ensure that $\partial^* E\cup \cup_i \partial^* \D_i$ is $\mathcal{C}$-spanning and that
\begin{align}\notag \color{black}
    \partial^* \big[E \Delta \bigcup_i \D_i \big] \ehn \partial^* E \cup \bigcup_i \partial^* \D_i\,.
\end{align}
Together, these two properties will allow us to conclude that $\Om\cap\partial^* (E \Delta (\cup_i \D_i))$ is $\mathcal{C}$-spanning, and from here we fix the volumes to obtain $E'$. The bulk spanning case is significantly simpler, since we only have to ensure that $(E')^\one \cup(\Om \cap \pa^* E')$ is $\mathcal{C}$-spanning and not the (much) smaller set $\Om \cap \pa^* E'$.

\begin{proof}[Proof of Theorem \ref{thm main of approximation} for $\Psi_{\rm bd}$]
An outline is as follows. After making a preliminary reduction of the problem, we prove two necessary facts about measure-theoretic spanning. After that, in step two, we record a basic property of locally $\mathcal{H}^n$-rectifiable sets. Then, we devise a technical tool for the iteration in step three. We perform {\color{black}the iteration} in step four, and use Theorem \ref{theorem collapsed capillarity compactness} in steps five and six to take a limit and arrive at our approximating set $E'$.
\par
\medskip
\noindent\textit{Step zero}: Before making the approximation, we simplify the problem. We claim that to prove the theorem, it is enough to choose $\delta_j \to 0$ and produce a sequence of sets $\{E_j\}$ with $\mathcal{C}$-spanning reduced boundaries such that 
\begin{align}\label{need to show}
|E_j\Delta E|\leq \delta_j\quad \textup{and}\quad P(E_j;\Omega) \leq \mathcal{F}_{\rm bd}(K,E) + \delta_j\,.
\end{align}
To see why this is sufficient, by the $L^1$-convergence to $E$ and the volume-fixing variations lemma \cite[Section 29.6]{maggiBOOK}, there exists $C_0>0$, $C_1>0$ such that for all large $j$, we may obtain $G_j=g_j(E_j)$ for some diffeomorphism $g_j$ which is identity away from finitely many balls, and with
\begin{align}\notag
    |G_j|=|E|\,,\quad |G_j \Delta E_j| \leq C_0\Big||E|-|E_j| \Big|\,,\quad\textup{and}\quad |P(G_j;\Omega)-P(E_j;\Omega)| \leq C_1\Big||E|-|E_j| \Big|\,.
\end{align}
Given some $\delta>0$, by choosing $j$ large enough so that $\max\{C_0,C_1\}\delta_j<\delta$, $G_j$ satisfies the conclusions of Theorem \ref{thm main of approximation}. In particular, the $\mathcal{C}$-spanning requirement is fulfilled since $\partial^* E_j$ is $\mathcal{C}$-spanning and the diffeomorphisms $g_j$ preserve the property of being $\mathcal{C}$-spanning (as can be seen from Definition \ref{def homot span borel} and the fact that, if $B$ is Borel, $g_j(B^{\small(t)}) = (g_j(B))^{\small(t)}$ if $t=0,1$ and $g_j(\pa^e B) = \pa^e (g_j(B))$). Therefore, for the rest of the proof, we consider fixed small $\delta:=\delta_j$ and construct $E_j$ with $\mathcal{C}$-spanning reduced boundary such that \eqref{need to show} holds.
\par
\medskip
\noindent\textit{Step one}: Here we prove two claims. The second will ensure that our approximating sets are $\mathcal{C}$-spanning.
\par
\medskip
\noindent\textbf{Claim one:} If $(\gamma,\Phi,T)\in \mathcal{T}(\mathcal{C})$, $x\in T[s_0]$, $B_r(x)\setminus T[s_0]$ consists of two connected components $B_r^+(x)$ and $B_r^-(x)$ with $\pa B_r^+(x) \cap \pa B_r^-(x)=T[s_0]\cap B_r(x)$, and there exists an open set $D$ and ball $B_s(y)$ with $B_r(x)\subset D\subset B_s(y)\subset \Omega$, then $B_r^+(x)$ and $B_r^-(x)$ are subsets of two distinct open, connected components $A^+$ and $A^-$ of $(D \cap T)\setminus T[s_0]$.
\par
\medskip
The argument relies on the same construction \cite[Proof of Theorem A.1]{MNR1} that is behind the equivalence of Definition A and Definition B among closed sets.  {\color{black}Crucial to the argument is the inclusion $D\subset B_s(y)$, which ensures that $D$ is contractible in $\Omega$.}
\par
To prove the claim, suppose for contradiction that $B_r^+(x)$ and $B_r^-(x)$ both were subsets of a single open, connected component $A$ of $(D \cap T) \setminus T[s_0]$. We are going to use this to construct a curve in $\mathcal{C}$ homotopic to a point, which will be a contradiction. Since open connected sets in Euclidean space are path connected, for some $x_+\in B_r^+(x)$ and $x_-\in B_r^-(x)$, we can find a smooth curve $\gamma_1\subset (D \cap T) \setminus T[s_0]$ with endpoints $x_+$ and $x_-$. By Sard's theorem \cite[Proof of Lemma 10, Step 2]{DGM}, $\gamma_1$ can be deformed so that it meets $\pa B_{r}(x)$ transversally at a finite set of points, in which case $\gamma_1([0,1]) \setminus \cl B_{r}(x)$ consists of a finite disjoint union of arcs $\gamma_1((a_i,b_i))$ with $(a_i,b_i)\subset [0,1]$. Since $\gamma_1\cap T[s_0]=\varnothing$ and $T[s_0]$ disconnects $B_r(x)$, we may find $i$ such that $\gamma_1(a_i)\in \cl B_r^+(x) \cap \pa B_{r}(x)$ and $\gamma_1(b_i)\in \cl B_r^-(x) \cap \pa B_{r}(x)$ (modulo reversing the orientation of $\gamma_1$). Set $\gamma_2$ to be $\gamma_1$ restricted to $[a_i,b_i]$ so that $\gamma_2([a_i,b_i])\cap B_r(x)=\varnothing$. Now let $\g_3:[0,1]\to B_r(x)$ be an embedding with $\g_3(0)=\gamma_2(a_i)$, $\g_3(1)=\gamma_2(b_i)$ and such that $\g_2([0,1])\cap T[s]\cap B_{r(x)}(x)$ is one point $x_{3}=\g_3(t)$ and
\begin{align}\label{zero derivative}
\g_3'(t)\neq 0\,.
\end{align}
We may further arrange $\gamma_3$ so that the concatenation $\gamma_*$ of $\gamma_2$ and $\gamma_3$ is smooth. Letting $\mathbf{p}_{\mathbb{S}^1}$ denote the canonical projection map from $\mathbb{S}^1\times B_1^n$ to $\mathbb{S}^1$, we find from \eqref{zero derivative} and $\gamma_2\cap T[s_0]=\varnothing$ that the $\mathbb{S}^1$-valued curve $\mathbf{p}_{\mathbb{S}^1}(\Phi^{-1}(\gamma_*))$ has Brouwer degree either $+1$ or $-1$ \cite[pg. 27]{Mil97}. If its degree is $-1$, let us set $\gamma_{**}(s)=\gamma_{*}(\overline{s})$ (where the bar denotes complex conjugation) so that $\gamma_{**}$ has the same image as $\gamma_*$ and the degree of $\mathbf{p}_{\mathbb{S}^1}(\Phi^{-1}(\gamma_{**}))=+1$. If the degree is $+1$, simply set $\gamma_{**}=\gamma_*$. Now the $\mathbb{S}^1$-valued curves $\mathbf{p}_{\mathbb{S}^1}(\Phi^{-1}(\gamma_{**}))$ and $\mathbf{p}_{\mathbb{S}^1}(\Phi^{-1}(\gamma))$ both have winding number 1 and are therefore homotopic to one another. Since $B_1^n$ is convex and $\Phi$ is a diffeomorphism, then $\gamma$ is homotopic to $\gamma_{**}$. {\color{black}But this is impossible, since $\gamma_{**} \subset D \subset B_s(y)\subset \Omega$, implying the contractibility of $D$ in $\Omega$, and, by $\ell<\infty$, $\mathcal{C}$ does not contain any curves homotopic in $\Omega$ to a point (see Remark \eqref{no points remark}).} This gives the desired contradiction and thus concludes the proof of claim one.
\par
\medskip
\noindent\textbf{Claim two:} If $(\gamma,\Phi,T)\in \mathcal{T}(\mathcal{C})$ and $D$ is an open bounded set of finite perimeter with $D\subset B_s(y)\subset \Omega$ for some ball $B_s(y)$, then for $\mathcal{H}^n$-a.e. $x\in T[s_0] \cap D$, there exists an element $V_i$ of the essential partition of $T$ induced by $\partial^* D \cup T[s_0]$ such that $x\in \partial^* V_i$.
\par
\medskip
To see why this second claim follows from the first, fix any $B_r(x) \subset D \cap T$ centered at $x\in T[s_0] \cap D$ small enough so that $T[s_0]$ divides $B_r(x)$ into two open connected components, $B_r^+(x)$ and $B_r^-(x)$ with common boundary $T[s_0] \cap B_r(x)$. By claim one, $B_r^+(x)$ and $B_r^-(x)$ are subsets of distinct connected components $A_+$ and $A_-$ of $(D\cap T)\setminus T[s_0]$. Moreover, $A_+$ and $A_-$ are sets of finite perimeter by \cite[4.5.11]{F} since $\partial^e A_{\pm} \subset T[s_0]\cup \partial^e D \cup \partial T$. Then setting $B = (D \cap T)\setminus (A_+ \cup A_-)$ and $C= T \setminus D$ (note that $C\neq \varnothing$ since if it were we would have $T\subset B_s(y)$ so that $\gamma$ is homotopic to a point), we find that $\{A_+,A_-,B,C\}$ is a non-trivial Lebesgue partition of $T$ into sets of finite perimeter such that, by standard facts about unions/intersections of sets of finite perimeter \cite[Ch. 16]{maggiBOOK},
\begin{align}\label{thingy}
  T \cap ( \partial^* A_+ \cup \partial^* A_- \cup \partial^* B \cup \partial^* C) \mbox{ is $\mathcal{H}^n$-contained in $\partial^* D \cap T[s_0]$}\,.
\end{align}
In words, it is a Lebesgue partition of $T$ induced by $\partial^* D \cap T[s_0]$, and so therefore Theorem \ref{theorem decomposition}(a) implies that $\partial^* A_+ \cap T $ is $\mathcal{H}^n$-contained in $\cup_i \partial^* V_i$, where $\{V_i\}_i$ is the essential partition of $T$ induced by $\partial^* D \cap T[s_0]$. Since $T[s_0] \cap B_r(x) \subset \partial^* A_+ \cap T$ and $x\in T[s_0] \cap D$ was arbitrary, this finishes the claim.
\par
\medskip
\noindent\textit{Step two}: Here we show that if $R\subset \Omega$ is a locally $\mathcal{H}^n$-rectifiable set with finite $\mathcal{H}^n$-measure, then up to an $\mathcal{H}^n$-null set, it can be decomposed into a countable union of pairwise disjoint compact Lipschitz graphs, each of which is contained in some ball disjoint from $\wire$. More precisely, we can write
\begin{align}\label{kin and kout def}
    R = A_0 \cup \bigcup_{m=1}^\infty f_m(A_m)
\end{align}
where $\mathcal{H}^n(A_0)=0$, and, for each $m\geq 1$, there exist $v_m\in \mathbb{S}^1$, Lipschitz function $F_m:v_m^\perp \to \mathbb{R}$, and compact $A_m \subset v_m^\perp$ such that
$$
f_m(y) =y+ F(y)v_m \quad \forall y\in A_m\,.
$$
and, for some $B_{t_m}(x_m) \subset\!\subset \Omega$,
\begin{align}\label{AM ball containment}
    f_m(A_m) \subset B_{t_m}(x_m)\,.
\end{align}
To see this, recall that any locally $\mathcal{H}^n$-rectifiable set can be covered by a family of Lipschitz graphs \cite[Proposition 2.76]{AFP} and can therefore be decomposed up to an $\mathcal{H}^n$-null set $B_0$ into a countable union of pairwise disjoint Lipschitz graphs $f_m(B_m)$, where each $B_m$ is a Borel subset of some $n$-dimensional plane. Since each such $B_m$ is $\mathcal{H}^n$-equivalent to a countable union of disjoint compact sets, which we make take to be small enough so that their images under $f_m$ are each contained in some ball disjoint from $\wire$, the desired decomposition follows. In the next step, we will use the notation $\partial_{v_m^\perp}$ to denote the natural notion of boundary for subsets of $v_m^\perp$ that arises by identifying $v_m^\perp$ with $\mathbb{R}^n$. Let also denote by $\mathbf{p}_{v_m^\perp}$ the projection onto $v_m^\perp$.
\par
\medskip
\noindent\textit{Step three}: In this step we prove the following claim, which is the main technical tool for the iteration.
\par
\medskip
\noindent{\bf Claim:} If $F\subset \Omega$ is relatively closed, $E \subset \Omega$ is a set of locally finite perimeter in $\Omega$, $R\subset \Omega \setminus F$ is a locally $\mathcal{H}^n$-rectifiable set with finite $\mathcal{H}^n$-measure, $\{v_m\}$, $\{f_m\}$, $\{A_m\}$, and $\{B_{t_m}(x_m)\}$ are vectors, functions, compact sets, and balls corresponding to the decomposition of $R$ from step two, and $\beta\in (0,1)$, then there exists $M\in \mathbb{N}$ and open sets of finite perimeter $D_1, \dots, D_M$ with the following properties:
\begin{align}\label{eats up R}
\mathcal{H}^n\Big(R \setminus \bigcup_{m=1}^M \cl D_m \Big) &< \frac{\mathcal{H}^n(R)}{2}\,,
    \\ \label{no overlap with each other}
    \cl(D_m) \cap \cl(D_{m'})&=\varnothing\quad\mbox{for }m\neq m', 1\leq m, m' \leq M\,,  \\ \label{farness from D0 1}
\dist ( D_m ,  F)>&0\quad\forall m\leq M\,, \\ \label{okay for spanning}
    f_m(A_m)\subset D_m &\subset\!\subset B_{t_m}(x_m)\subset \Omega\quad \forall m\leq M\,,
    \\
 \label{rect of Cm Dm}
    \partial D_m\textup{ is Lipschitz }&\textup{ with $\mathcal{H}^n(\partial D_m \setminus \partial^* D_m)=0$}\quad\forall m\leq M\,, \\ \label{small volume}
    \sum_{m=1}^M|D_m| &\leq \beta\,,
 \\ \label{farness from D0 2}
 \Big|B_s(z) \cap \bigcup_{m=1}^M D_m \Big| &< \beta|B_s(z)|\quad \forall z\in  F \cap \Omega\,,\, s>0\,,\\    \label{no overlap with E}
    \mathcal{H}^n(\partial D_m &\cap \partial^* E )=0\quad \forall m\leq M\,, \\ 
 \label{closeness of Hausdorff business}
    P(D_m)&\leq 2\,\mathcal{H}^n(f_m(A_m)) + \frac{\beta}{2^m}\quad \forall m \leq M\,.
\end{align}
Since there are many requirements, to aid the reader we will make our construction in an order that mirrors the order of \eqref{eats up R}-\eqref{closeness of Hausdorff business}, refining as we go so as to satisfy each successive requirement.
\par
First, let us choose $M\in \mathbb{N}$ large enough so that
\begin{align}\notag
   \sum_{m=M+1}^\infty \mathcal{H}^n(f_m(A_m)) < \frac{\mathcal{H}^n(R)}{2}\,.
\end{align}
With $M$ chosen as such, any open sets $D_m$ satisfying $f_m(A_m)\subset D_m$ will therefore give
\begin{align}\label{verify 4.5}
    \mathcal{H}^n\Big(R \setminus \bigcup_{m=1}^M \cl D_m \Big)=\sum_{m'=1}^\infty\mathcal{H}^n\Big(f_{m'}(A_{m'}) \setminus \bigcup_{m=1}^M \cl D_m \Big) \leq \sum_{m=M+1}^\infty \mathcal{H}^n(f_m(A_m)) < \frac{\mathcal{H}^n(R)}{2}\,,
\end{align}
which is \eqref{eats up R}. Next, using the fact that $f_m(A_m)$ are pairwise disjoint compact sets each of which is at positive distance from $F$ (by virtue of $R\subset \Omega \setminus F$ and the relative closedness of $F$) and each other, we choose may choose
open sets $A_m\subset T_m\subset v_m^\perp$ with smooth boundary such that for $m\leq M$,
\begin{align}\label{pairwise disjoint}
\min\{ \dist( f_m(T_m), f_{m'}(T_{m'}))&,\,\dist(f_m(T_m),F)\}>0\quad\forall m'\neq m\,, 
\\ \label{compact containment 1}
    f_m(\cl T_m) &\overset{\eqref{AM ball containment}}{\subset} B_{t_m}(x_m)\,, \\ \label{close lebesgue measure}
    \mathcal{H}^n(f_m(T_m)) &< \mathcal{H}^n(f_m(A_m)) + \frac{\beta}{3\times 2^{m}}\,.
\end{align}
Let us set $(T_m)_t=\{x\in T_m: \dist(x,\partial_{v_m^\perp} T_m)>t\}$. Now, for each $m\in \{1,\dots,M\}$, by \eqref{pairwise disjoint}-\eqref{compact containment 1}, there exists $\tau_{m,0}>0$ such that for any $\tau_{m,i}<\tau_{m,0}$, $i=1,2$, 
\begin{align}\notag
    A_m \subset (T_m)_{\tau_{m,1}}
\end{align}
and the cylindrical type sets over the graphs $f_m((T_m)_{\tau_{m,1}})$ defined by
\begin{align*}
    C_{\tau_{m,1},\tau_{m,2}}^m:=\{z\in \mathbb{R}^{n+1}:\mathbf{p}_{v_m^\perp}(z)\in (T_m)_{\tau_{m,1}},\, |z\cdot v_m - F_m(\mathbf{p}_{v_m^\perp}(z))|<\tau_{m,2} \}\,,
\end{align*}
satisfy 
\begin{align}\label{pairwise disjoint 2}
\min\{ \dist(C_{\tau_{m,1},\tau_{m,2}}^m, C_{\tau_{m',1},\tau_{m',2}}^{m'})&,\,\dist(C_{\tau_{m,1},\tau_{m,2}}^m,F)\}>0\quad\forall m'\neq m, \tau_{m',i}<\tau_{m',0}\,, 
\\ \label{compact containment 2}
    f_m(\cl C_{\tau_{m,1},\tau_{m,2}}^m) &\subset B_{t_m}(x_m)\,,
\end{align}
that is, \eqref{no overlap with each other}-\eqref{okay for spanning} with $C_{\tau_{m,1},\tau_{m,2}}^m=D_m$. Postponing our choice of $\tau_{m,i}$ until later on (we will need this choice for \eqref{no overlap with E}), note that each $\partial C_{\tau_{m,1},\tau_{m,2}}^m$ is Lipschitz since it is the image through the Lipschitz map
$$
x\mapsto \mathbf{p}_{v_m^\perp}(x) + [F_m(\mathbf{p}_{v_m^\perp}(x))+x\cdot v_m]v_m
$$
of the Lipschitz cylinder $\partial_{v_m^\perp} (T_m)_{\tau_{m,1}} + \{tv_m:|t|\leq \tau_{m,2}\}$, which is \eqref{rect of Cm Dm}. Furthermore, it is clear that we may decrease the $\tau_{m,0}$'s if necessary to ensure \eqref{small volume}. For \eqref{farness from D0 2}, we first notice that
\begin{align}\label{cyl vol to 0}
  |C_{\tau_{m,1},\tau_{m,2}}^m| = 2\tau_{m,2} \mathcal{H}^n((T_m)_{\tau_{m,1}})\leq 2\tau_{m,0} \mathcal{H}^n(T_m)\to 0\quad\textup{as }\tau_{m,0}\to 0\,.  
\end{align}
Together with the fact that $C_{\tau_{m,1},\tau_{m,2}}^m$ is at positive distance from $F$ (by \eqref{pairwise disjoint 2}), \eqref{cyl vol to 0} guarantees that we can decrease $\tau_{m,0}$ if necessary to ensure that for any $\tau_{m,1},\tau_{m,2}<\tau_{m,0}$,
\begin{align}\label{farness from D0 3}
    \Big|B_s(z) \cap \bigcup_{m=1}^M C_{\tau_{m,1},\tau_{m,2}}^m \Big| &< \beta|B_s(z)|\quad \forall z\in F \cap \Omega\,,\, s>0\,,
\end{align}
which is \eqref{farness from D0 2} with $C_{\tau_{m,1},\tau_{m,2}}^m=D_m$. To recap, every choice of $\tau_{m,1},\tau_{m,2}<\tau_{m,0}$ and corresponding $C_{\tau_{m,1},\tau_{m,2}}^m=D_m$ gives sets that satisfy \eqref{eats up R}-\eqref{farness from D0 2}, and so we must choose the small parameters so as to satisfy \eqref{no overlap with E}-\eqref{closeness of Hausdorff business}. By the smoothness of $\partial_{v_m^\perp} T_m$, we may decrease each $\tau_{m,0}$ so that for $\tau<\tau_{m,0}$, $\partial_{v_m^\perp} (T_m)_{\tau}$ is smooth. Now, aiming towards \eqref{no overlap with E}, we recall that since $\mathcal{H}^n(\partial^* E )<\infty$, for any uncountable family of pairwise
disjoint sets, $\partial^*E$ can have $\mathcal{H}^n$-positive overlap with at mostly countably many. Therefore, there is an at most countable set $\mathcal{T}_1\subset (0,\min_m \tau_{m,0})$ such that
\begin{align}\label{4.23}
    \sup_{1\leq m\leq M} \big\{\mathcal{H}^n(\partial^* E \cap \{x\in \mathbb{R}^{n+1}: \mathbf{p}_{v_m^\perp}(x)\in \partial_{v_m^\perp}(T_m)_\tau\})\big\}=0 \quad \forall \tau\notin \mathcal{T}_1\,.
\end{align}
Let use choose $\tau_1\notin \mathcal{T}_1$ and set $\tau_{m,1}=\tau_1$ for $1\leq m \leq M$. Then by \eqref{4.23}, the lateral boundaries of our sets $C_{\tau_{m,1},\tau_{m,2}}^m$ will have trivial $\mathcal{H}^n$-overlap with $\partial^* E$. To now choose $\tau_{m,2}$, we compute
\begin{align}\notag
    \mathcal{H}^n(\partial C_{\tau_{m,1},\tau_{m,2}}^m)&=2\,\mathcal{H}^n(f_m((T_m)_{\tau_1}))+ 2\,\tau_{m,2} \mathcal{H}^{n-1}(\partial_{v_m^\perp} (T_m)_{\tau_1}) \\ \label{nhbd close perimeter tm}
    &\overset{\eqref{close lebesgue measure}}{<} 2\,\mathcal{H}^n(f_m(A_m)) + \frac{2\beta}{3\times 2^{m}} + 2\,\tau_{m,2} \mathcal{H}^{n-1}(\partial_{v_m^\perp} (T_m)_{\tau_1}) \,. 
\end{align}
Next, by the exact same countability argument as leading to \eqref{4.23}, there is an at most countable set $\mathcal{T}_2\subset (-\min_m \tau_{m,0},\min_m \tau_{m,0})$ such that 
\begin{align}\label{4.23 redux}
    \sup_{1\leq m\leq M} \big\{\mathcal{H}^n(\partial^* E \cap \{x\in \mathbb{R}^{n+1}: F(\mathbf{p}_{v_m^\perp}(x))+\tau= x\cdot v_m  \})\big\}=0 \quad \forall \tau\notin \mathcal{T}_2\,.
\end{align}
We choose $\tau_2$ with $\pm\tau_2\notin\mathcal{T}_2$ and small enough so that
\begin{align}\notag
    \sup_{1\leq m \leq M}2\,\tau_{2} \mathcal{H}^{n-1}(\partial_{v_m^\perp} (T_m)_{\tau_1})< \frac{\beta}{3\times 2^m}
\end{align}
and set $D_m = C_{\tau_{1},\tau_{2}}^m$. By \eqref{nhbd close perimeter tm} and the choice of $\tau_2$, we have
\begin{align}\notag
    \mathcal{H}^n(\partial D_m) < 2\,\mathcal{H}^n(f_m(A_m)) + \frac{\beta}{2^m}\,,
\end{align}
which is \eqref{closeness of Hausdorff business}. Lastly, by the definitions of $D_m$ and $\mathcal{T}_1$ and $\mathcal{T}_2$, we have 
\begin{align}\notag
    \mathcal{H}^n(\partial D_m\cap \partial^* E)= \mathcal{H}^n(\partial C_{\tau_{1},\tau_{2}}^m\cap \partial^* E) &\leq \mathcal{H}^n(\partial^* E \cap \{x\in \mathbb{R}^{n+1}: \mathbf{p}_{v_m^\perp}(x)\in \partial_{v_m^\perp}(T_m)_{\tau_1}\}) \\ \notag
    &\quad+ \mathcal{H}^n(\partial^* E \cap \{x\in \mathbb{R}^{n+1}: F(\mathbf{p}_{v_m^\perp}(x))\pm\tau_2= x\cdot v_m  \})\\ \notag
    &=0\,,
\end{align}
which is \eqref{no overlap with E}.
\par
\medskip
\noindent\textit{Step four}: For the rest of the proof, we fix $\delta_j$ and attempt to verify the reduction of the theorem laid out in step zero with this $\delta_j$. Here we will iteratively apply the previous step's claim to $\mathcal{R}(K)\setminus \partial^* E$. To begin with, we apply the claim with $F=\varnothing$, $R=\mathcal{R}(K)\setminus \partial^* E$, and $\beta=\min\{1/8,\delta_j/4\}$, yielding $M_0\in \mathbb{N}$ and open sets of finite perimeter $D_1^0,\dots, D_M^0$. If we set $\mathcal{D}_0=\cup_{m=1}^M D_m^0$, then by \eqref{no overlap with each other}, $\partial \mathcal{D}_0$ is the disjoint union of $\partial D_m^0$, and is therefore Lipschitz with $P(\mathcal{D}_0) = \sum_{m=1}^{M_0}P(D_m^0)$. By \eqref{eats up R}-\eqref{closeness of Hausdorff business}, our sets $D_m^0$ and $\mathcal{D}_0$ satisfy:
\begin{align}\label{eating up 0}
    \mathcal{H}^n \big( \mathcal{R}(K)\setminus (\partial^* E \cup \cl \mathcal{D}_0) \big) &< \frac{\mathcal{H}^n(\mathcal{R}(K)\setminus \partial^* E)}{2} \\ \label{ok for spanning D0}
    f_m^0(A_m^0) \subset D_m^0 \subset\!\subset &B_{t_{m,0}}(x_{m,0}) \subset \Omega \quad \forall m\leq M_0 \\ \label{small volume D0}
    |\mathcal{D}_0| &\leq \frac{\delta_j}{4}\,,  \\ \label{D0 no overlap with E}
    \mathcal{H}^n(\partial \mathcal{D}_0 &\cap \partial^* E) = 0 \,, \\ \label{per est D0}
    P(\mathcal{D}_0)=\sum_{m=1}^{M_0}P(D_m^0) \leq \sum_{m=1}^{M_0} 2\mathcal{H}^n(f_m^0(A_m))& + \frac{\delta_j}{4} \leq 2\mathcal{H}^n(\mathcal{R}(K) \cap \cl\mathcal{D}_0 \setminus \partial^* E) + \frac{\delta_j}{4}\,.
\end{align}
With the initial step complete, now we iteratively apply the claim for $k=1,2,3,\dots$, with, at the $k$-th stage, $F=F_{k-1} = \cl \mathcal{D}_{0}\cup \dots \cup \cl \mathcal{D}_{k-1}$, $R=R_{k} = \mathcal{R}(K) \setminus (\partial^* E \cup F_{k-1})$, and $\beta=\min\{2^{-k-3},\delta_j / 2^{k+2}\}$. We obtain a sequence of families of sets  $D_1^{k},\dots, D_{M_k}^{k}$ with $\mathcal{D}_k = \cup_{m=1}^{M_k}D_m^k$ having Lipschitz boundary $\cup_{m=1}^{M_k} \partial D_m^k$ and satisfying, for each $k\geq 1$,
\begin{align}\label{eating up k}
    \mathcal{H}^n \big( \mathcal{R}(K)\setminus (\partial^* E\cup F_{k-1} \cup \cl \mathcal{D}_{k} )\big) &< \frac{\mathcal{H}^n(\mathcal{R}(K)\setminus (\partial^* E\cup F_{k-1}))}{2}\,,
    \\ \label{away from Fk Dk}
    \dist( \mathcal{D}_k, F_{k-1}) > 0\,,
    \\ \label{ok for spanning Dk}
    f_m^k(A_m^k) \subset D_m^k \subset\!\subset &B_{t_{m,k}}(x_{m,k}) \subset \Omega \quad \forall m\leq M_k\,, \\ \label{small volume Dk}
    |\mathcal{D}_k| &\leq \frac{\delta_j}{2^{k+2}}\,,  \\ \label{boundary Dk estimate}
    |B_s(z) \cap \mathcal{D}_k| < \frac{|B_s(z)|}{2^{k+3}}&\quad \forall z\in \cl \mathcal{D}_0 \cup \dots \cup \cl \mathcal{D}_{k-1}\,,\, s>0\,, \\
    \label{Dk no overlap with E}
    \mathcal{H}^n(\partial \mathcal{D}_k &\cap \partial^* E) = 0 \,, \\ \label{per est Dk}
    P(\mathcal{D}_k)=\sum_{m=1}^{M_k}P(D_m^k) \leq \sum_{m=1}^{M_k} 2\mathcal{H}^n(f_m^k(A_m^k))& + \frac{\delta_j}{2^{k+2}} \leq 2\mathcal{H}^n(\mathcal{R}(K) \cap \cl\mathcal{D}_k \setminus \partial^* E) + \frac{\delta_j}{2^{k+2}}\,.
\end{align}
We remark that a consequence of \eqref{away from Fk Dk} and the fact that each $\mathcal{D}_k$ has Lipschitz boundary is
\begin{align}\label{dk breakdown}
   \mathcal{H}^n\mres (\partial^* F_k) = \sum_{k'=0}^k \mathcal{H}^n \mres (\partial^* \mathcal{D}_{k'})\,.
\end{align}
Now by repetitively using \eqref{eating up k} and finally \eqref{eating up 0}, we obtain
\begin{align}\label{final eating up}
    \mathcal{H}^n\big(\mathcal{R}(K) \setminus (\partial^* E \cup F_k ) \big)< \frac{\mathcal{H}^n(\mathcal{R}(K)\setminus\partial^* E)}{2^{k+1}}\,.
\end{align}
\par
We also claim that for each $k\geq 0$, the $\mathcal{H}^n$-rectifiable sets 
\begin{align}\notag
   S_k:= (\Omega \cap \partial^* E)\cup \partial F_k \cup  &[\mathcal{R}(K) \setminus (F_k\cup \partial^* E)]   \\ \label{hn equiv sets}
    &\quad \mbox{ are $\mathcal{H}^n$-equivalent to }[\Omega \cap \partial^*( E \Delta F_k)] \cup [\mathcal{R}(K) \setminus (F_k \cup \partial^* E )]
\end{align}
and are also $\mathcal{C}$-spanning. For the $\mathcal{H}^n$-equivalence, we first observe that since $\mathcal{D}_{k'}$ are each at mutual positive distance from each other for $0\leq k' \leq k$ and have Lipschitz boundaries,
\begin{align}\label{breakdown of partial F}
    \partial F_k \overset{\mathcal{H}^n}{=}\partial^* F_k \overset{\mathcal{H}^n}{=} \cup_{k'=0}^k \partial \mathcal{D}_{k'}\,.
\end{align}
Therefore, by the ``non-overlapping" \eqref{D0 no overlap with E} and \eqref{Dk no overlap with E} of each $\partial \mathcal{D}_{k'}$ with $\partial^* E$, we have
\begin{align}\label{Fk no overlap with E}
    \mathcal{H}^n(\partial^* E \cap \partial^* F_k) = 0\,.
\end{align}
As a consequence, the formula for the reduced boundary of the symmetric difference of two sets \cite[Exercise 16.5]{maggiBOOK} gives 
\begin{align}\label{application of maggi 16}
    \partial^*(E \Delta F_k)\overset{\mathcal{H}^n}{=}(\partial^* E \setminus \partial^* F_k) \cup (\partial^* F_k \setminus \partial^* E) \overset{\mathcal{H}^n}{=} \partial^* E \cup \partial^* F_k \overset{\mathcal{H}^n}{=} \partial^* E \cup \partial F_k\,,
\end{align}
which immediately implies \eqref{hn equiv sets}. Note that as a consequence of this equivalence,
\begin{align}\label{belongs to Kb}
    (S_k,E \Delta F_k) \in \mathcal{K}_{\rm B}\,,
\end{align}
with \eqref{application of maggi 16}, \eqref{hn equiv sets}, and the $\mathcal{H}^n$-equivalence of $\partial F_k$ and $\partial^* F_k$ giving
\begin{align}\label{computation of R}
    \mathcal{R}(S_k) \setminus \partial^* (E \Delta F_k) \overset{\mathcal{H}^n}{=} S_k \setminus (\partial^* E \cup \partial^* F_k) \overset{\mathcal{H}^n}{=} \mathcal{R}(K) \setminus (F_k \cup \partial^* E)\,.
\end{align}
To prove that $S_k$ is $\mathcal{C}$-spanning, let us fix $(\gamma,\Phi,T)\in \mathcal{T}(\mathcal{C})$. We first recall that $\mathcal{R}(K)$ is $\mathcal{C}$-spanning by Proposition \ref{rect part is spanning prop}. Therefore, by Theorem \ref{theorem spanning with partition}, there exists $J\subset \mathbb{S}^1$ of full $\mathcal{H}^1$-measure such that if $s\in J$, then, letting $\{U_i\}_i$ denote the essential partition of $T$ 
induced by $\mathcal{R}(K) \cup T[s]$,
\begin{eqnarray}\label{spanning equation step four}
&&\mbox{$T[s]$ is $\H^n$-contained in $\cup_i \partial^* U_i$}\,.
\end{eqnarray}
Again by Theorem \ref{theorem spanning with partition}, to show that $S_k$ is $\mathcal{C}$-spanning, it is enough to show that for $s\in J$, letting $\{V_i\}_i$ denote the essential partition of $T$ induced by $\mathcal{R}(S_k)\cup T[s]$, 
\begin{eqnarray}\label{spanning equation step four Sk}
&&\mbox{$T[s]$ is $\H^n$-contained in $\cup_i \partial^* V_i$}\,.
\end{eqnarray}
\par
Towards demonstrating \eqref{spanning equation step four Sk}, we first claim that
\begin{align}\notag
    \mathcal{P}_k:=\{W : W= U_i \setminus F_k \mbox{ or }W =T \cap D_m^{k'}\mbox{ for some $k' \leq k$ and $m\leq M_{k'}$}\}
\end{align}
is a Lebesgue partition of $T$ induced by $S_k\cup T[s]$. Since $\{U_i\}$ is a Lebesgue partition of $T$ and $F_k$ is the disjoint union of $D_m^{k'}$ for $k'\leq k$ and $m\leq M_{k'}$, the fact that this is a Lebesgue partition of $T$ follows. Let us enumerate its (nontrivial) sets as $W_i$. To see that $\mathcal{P}_k$ is induced by $S_k\cup T[s]$, we begin by appealing to \cite[Ch. 16]{maggiBOOK} and the fact that $\{U_i\}$ is induced by $\mathcal{R}(K) \cup T[s]$ to see that
\begin{align}\label{induced 1}
  T \cap   \partial^* (U_i \setminus F_k)\shn T \cap [(\partial^* U_i \cap F_k^{(0)})\cup \partial^* F_k ] \subset T \cap \big[\big((\mathcal{R}(K)\cup T[s]) \setminus F_k\big) \cup \partial F_k\big] \shn S_k \cup T[s]\,,
\end{align}
where the last inclusion follows by the definition of $S_k$. On the other hand, if $W_i=T \cap D_m^{k'}$, then
\begin{align}\label{partial Dk in Sk}
\partial^* W_i \cap T \subset \partial D_m^{k'}\cap T\subset \partial F_k \cap T\subset S_k\,.
\end{align}
Therefore, since $\mathcal{P}_k$ is induced by $S_k \cup T[s]$, we may employ Theorem \ref{theorem decomposition}(a) to conclude that
\begin{align}\label{wi containment}
    \cup_i \partial^* W_i \shn \cup_i \partial^* V_i\,.
\end{align}
Finally we check \eqref{spanning equation step four Sk} in three cases. By \eqref{spanning equation step four}, it is enough to verify among those $x\in T[s]$ such that $x\in \partial^* U_i$ for some $i$. In the first case, suppose that $x\in T[s]$ is such that $\dist(x,F_k)>0$ and $x\in \partial^* U_i$ for some $i$. Then $x\in \partial^* (U_i\setminus F_k)$, and thus $\mathcal{H}^n$-a.e. such point belongs to $S_k \cup T[s]$ as desired by \eqref{wi containment}. The second two cases both involve $x\in T[s]$ such that $\dist(x,F_k)=0$. Suppose that $x\in \partial F_k$. We recall from Remark \ref{trivial partial E containment}, which applies to the pair $(S_k,E\Delta F_k)$ by \eqref{belongs to Kb}, that $\partial^* (E \Delta F_k) \cap T$ is $\mathcal{H}^n$-contained in $\cup_i \partial^* V_i$. Since $\partial^* (E\Delta F_k)\overset{\mathcal{H}^n}{=}\partial^* E \cup \partial F_k$ by \eqref{application of maggi 16}, we find that $\mathcal{H}^n$-a.e. $x\in T[s]\cap \partial F_k$ belongs to some $\partial^* V_i$ as desired. Lastly, let us consider those $x\in \mathrm{int}\,F_k \cap T[s]$; it is for these $x$ that we will use the result of step one. By the definition of $F_k$, every such $x$ must belong to $D_m^{k'}$ for some $k' \leq k$ and $m\leq M_{k'}$. So we consider those $x$ belonging to some fixed $D_m^{k'}$. According to \eqref{ok for spanning Dk}, $D_m^{k'}\subset\!\subset B_{t_{m,k'}}(x_{m,k'})\subset \Omega$. Then since $D_m^{k'}$ has Lipschitz boundary, we are precisely in the position of being able to apply claim two from step one to deduce that $\mathcal{H}^n$-a.e. $x\in T[s] \cap D_m^{k'}$ belongs to $\partial^* \tilde{V}_i$ for some $\tilde{V}_i$ in the essential partition of $T$ induced by $\partial^* D_m^{k'} \cup T[s]$. Since $\partial^* D_m^{k'} \cup T[s] \subset S_k \cup T[s]$ by \eqref{partial Dk in Sk}, Theorem \ref{theorem decomposition}(a) is in force, and we conclude that $\mathcal{H}^n$-a.e. $x\in T[s] \cap D_m^{k'}$ belongs to $\cup_i \partial^* V_i$. This finishes the third case and therefore the proof of \eqref{spanning equation step four Sk}.
\par
\medskip
\noindent\textit{Step five}: As a preliminary computation before taking the limit in $k$ in the next step, we set $F = \cup_{k=0}^\infty \mathcal{D}_k$, and claim that it is a set of finite perimeter with
\begin{align}\label{small volume D}
    |F| &\leq \frac{\delta_j}{2}\quad \textup{and} \\ \label{compute the reduced boundary}
    \partial^* F \cap \Omega &\mbox{ is $\mathcal{H}^n$-equivalent to } \cup_k \partial^* \mathcal{D}_k\,.
\end{align}
Let us begin by observing that by \eqref{small volume Dk} (the volume estimate on $\mathcal{D}_k$), we have
\begin{align}\notag
    |F|\leq \lim_{k\to \infty} \sum_{k'=0}^k \frac{\delta_j}{2^{k'+2}} \leq \frac{\delta_j}{2}\,,
\end{align}
which is \eqref{small volume D}, and also
\begin{align}\label{fk to f}
    |F \Delta F_k| \to 0\quad\textup{as }k\to \infty\,.
\end{align}
By \eqref{per est Dk} and the fact that the sets $\cl \mathcal{D}_k$ are mutually pairwise disjoint, we have
\begin{align}\notag
     \limsup_{k\to \infty}P(F_k;\Omega) = \limsup_{k\to \infty}\sum_{k'=0}^k P(\mathcal{D}_k;\Omega) &\leq \limsup_{k\to \infty}\sum_{k'=0}^k 2\mathcal{H}^n(\mathcal{R}(K) \cap \cl \mathcal{D}_{k'} \setminus \partial^* E) + \frac{\delta_j}{2^{k'+2}} \\ \label{fk perimeter estimate}
    &\leq 2\mathcal{H}^n(\mathcal{R}(K)\setminus \partial^* E) + \frac{\delta_j}{2}\,,
\end{align}
which combined with the $L^1$ convergence of $F_k$ to $F$ implies that $F$ is a set of finite perimeter in $\Omega$. It remains to determine $\partial^* F\cap \Omega$. Now since $F= \cup_k \mathcal{D}_k$ and those sets are open, we have
\begin{align}\label{part F containment}
   \Omega \cap  \partial^* F \subset \Omega \cap \partial F \subset \cup_k \partial \mathcal{D}_k \shn \cup_k \partial^* \mathcal{D}_k\,,
\end{align}
where in the last containment we used the fact, recorded above \eqref{eating up k}, that $\mathcal{D}_k$ has Lipschitz boundary. To prove the reverse inclusion, we claim that it is enough to prove that
\begin{align}\label{quarter lower bound}
    \frac{1}{2}\leq \limsup_{r\to 0}\frac{|F \cap B_r(x)|}{|B_r(x)|}\leq \frac{3}{4}\quad\textup{for $\mathcal{H}^n$-a.e. $x\in \partial^* \mathcal{D}_k$, $k\geq 0$}\,.
\end{align}
Indeed, if \eqref{quarter lower bound} held, then since $\Omega \shn \Omega \cap( F^{(1)} \cup F^{(0)} \cup \partial^* F)$ and $\Omega \cap \partial^* F \overset{\mathcal{H}^n}{=}\Omega \cap \partial^e F$ by a theorem of Federer (see e.g. \cite[Theorem 3.61]{AFP}), we would thus have
$$
\cup_k \partial^* \mathcal{D}_k \shn\Omega \cap \partial^* F\,.
$$
Combining this inclusion with \eqref{part F containment} would complete the proof of \eqref{compute the reduced boundary}. Now suppose $x\in \partial^* \mathcal{D}_k$. By \eqref{away from Fk Dk}, there exists $r_x>0$ such that $B_{r_x}(x) \cap \mathcal{D}_{k'}=\varnothing$ for $k'<k$. Then by this avoidance and \eqref{boundary Dk estimate} applied for those $\mathcal{D}_{k'}$ with $k'>k$, we may estimate for $r<r_x$
\begin{align}\notag
  \frac{|\mathcal{D}_k \cap B_r(x)|}{|B_r(x)|} &\leq \frac{|F \cap B_r(x)|}{|B_r(x)|}\\ \notag
  &\leq \frac{\sum_{k'\geq k}|\mathcal{D}_{k'}\cap B_r(x)|}{|B_r(x)|} \leq \frac{|\mathcal{D}_k\cap B_r(x)|}{|B_r(x)|}+ \frac{\sum_{k'>  k}2^{-k'-3}|B_r(x)|}{|B_r(x)|} \leq \frac{|\mathcal{D}_k\cap B_r(x)|}{|B_r(x)|}+\frac{1}{4}\,.
\end{align}
Taking $r\to 0$ and using the fact that $x\in \partial^* \mathcal{D}_k$ gives \eqref{quarter lower bound}. 
\par
Before moving on to the final step, we record for later use that by \eqref{compute the reduced boundary} and \eqref{Dk no overlap with E}, 
\begin{align}\label{F no overlap with E}
    \mathcal{H}^n(\Omega \cap (\partial^* F \cap \partial^* E)) = 0\,.
\end{align}
Again by \cite[Exercise 16.5]{maggiBOOK}, this gives
\begin{align}\label{E delta F red bd}
    \Omega \cap \partial^* (E\Delta F) \overset{\mathcal{H}^n}{=}\Omega \cap [(\partial^* E \setminus \partial^* F)\cup(\partial^* F \setminus \partial^* E)] \overset{\mathcal{H}^n}{=}\Omega \cap (\partial^* E \cup \partial^* F)\,.
\end{align}
\par
\medskip
\noindent{\it Step six:} Our goal now is to apply the compactness Theorem \ref{theorem collapsed capillarity compactness} to the pairs $(S_k,E \Delta F_k)$ and then verify \eqref{need to show}. Now each $(S_k,E \Delta F_k)$ belongs to $\mathcal{K}_{\rm B}$ by \eqref{belongs to Kb} and we have
\begin{align}\notag
   \sup_j \mathcal{H}^n(S_k) \leq P(E;\Omega) + \sup_{k} P(F_k;\Omega) + \mathcal{H}^n(\mathcal{R}(K)) \overset{\eqref{fk perimeter estimate}}{<} \infty\,.
\end{align}
Note that $E\Delta F_k\overset{L^1}{\to} E\Delta F$ by \eqref{fk to f}. Therefore, Theorem \ref{theorem collapsed capillarity compactness} applies and says that the pair $(S,E\Delta F)$ belongs to $\mathcal{K}_{\rm B}$ and $S$ is $\mathcal{C}$-spanning, where
\begin{align}\notag
    S &= (\Omega \cap \partial^* (E\Delta F)) \cup \{x\in \Omega \setminus \partial^*(E\Delta F) : \theta_*^n(\mu)(x)\geq 2 \} \\ \notag
    &\overset{\eqref{E delta F red bd}}{=} (\Omega \cap (\partial^* E\cup \partial^*  F)) \cup \{x\in \Omega \setminus (\partial^*E\cup \partial^* F) : \theta_*^n(\mu)(x)\geq 2 \}\,,
\end{align}
where $\mu$ is the weak star limit of
\begin{align}\notag
    \mathcal{H}^n\mres (\Omega \cap \partial^* (E \Delta F_k)) &+ 2\mathcal{H}^n\mres (\mathcal{R}(S_k) \setminus \partial^* (E \Delta F_k)) \\ \notag
    &\overset{\eqref{application of maggi 16}}{=}\mathcal{H}^n\mres (\Omega \cap (\partial^* E \cup \partial^* F_k)) + 2\mathcal{H}^n\mres (\mathcal{R}(S_k) \setminus (\partial^* E \cup \partial^* F_k))\\ \notag
    &\overset{\eqref{computation of R}}{=}\mathcal{H}^n\mres (\Omega \cap (\partial^* E \cup \partial^* F_k)) + 2\mathcal{H}^n\mres (\mathcal{R}(K) \setminus (\partial^* E \cup F_k))\,.
\end{align}
Next, by \eqref{final eating up}, $\mathcal{H}^n(\mathcal{R}(K)\setminus(\partial^* E \cup F_k) )\to 0$, which implies then that $\mu$ is in fact the weak star limit of the measures $\mathcal{H}^n\mres (\Omega \cap (\partial^* E \cup \partial^* F_k))$. Therefore, for any $B_r(x)\subset\!\subset \Omega$ with $\mu(\partial B_r(x))=0$, 
\eqref{Fk no overlap with E}-\eqref{application of maggi 16} and \eqref{dk breakdown} give
\begin{align}\notag
   \mu(B_r(x)) =\lim_{k\to \infty}\mathcal{H}^n(\partial^* E \cup \partial^* F_k\cap B_r(x)) &= \lim_{k\to \infty} \mathcal{H}^n(\partial^* E\cap B_r(x)) + \sum_{k'=0}^k \mathcal{H}^n(\partial^* \mathcal{D}_{k'} \cap B_r(x)) \\ \notag
   &=  \mathcal{H}^n(\partial^* E\cap B_r(x)) + \sum_{k'=0}^\infty \mathcal{H}^n(\partial^* \mathcal{D}_{k'} \cap B_r(x))\,.
\end{align}
But by \eqref{F no overlap with E}-\eqref{E delta F red bd} and \eqref{compute the reduced boundary}, we also have
\begin{align}\label{what is the measure}
    \mathcal{H}^n  (B_r(x) \cap \partial^* (E \Delta F)) =\mathcal{H}^n(B_r(x) \cap \partial^* E)+ \mathcal{H}^n \Big(B_r(x) \cap  \bigcup_{k=0}^\infty \partial^* \mathcal{D}_k \Big)\,.
\end{align}
We have thus shown that $\mu(B_r(x)) = \mathcal{H}^n (B_r(x) \cap\partial^* (E \Delta F))$ for every $B_r(x)\subset\!\subset\Omega$ such that $\mu(\partial B_r(x))=0$. It follows that $\mu = \mathcal{H}^n\mres (\Omega \cap \partial^*(E \Delta F))$. Therefore, $S\setminus (\partial^* (E \Delta F))\overset{\mathcal{H}^n}{=}\varnothing$, and so $\Omega \cap \partial^* (E \Delta F)$ is $\mathcal{C}$-spanning since $S$ is $\mathcal{C}$-spanning.
\par
\medskip
We now have a set of finite perimeter $E \Delta F=:E_j$ with $\mathcal{C}$-spanning reduced boundary, and we must check \eqref{need to show}. For the volume estimate, we utilize \eqref{small volume D} to compute
\begin{align}\notag
    |E\Delta(E \Delta F)| = |E \cap F| + |F \setminus E| \leq 2|F| \leq \delta_j\,.
\end{align}
For the perimeter estimate, by \eqref{E delta F red bd} and \eqref{compute the reduced boundary}, \eqref{per est Dk}, and the fact that the $\mathcal{D}_k$ are pairwise disjoint,
\begin{align}\notag
P(E\Delta F;\Omega)&= P(E;\Omega) + \sum_{k=0}^\infty P(\mathcal{D}_k)\\ \notag
&\leq P(E;\Omega)+\sum_{k=0}^\infty2\,\mathcal{H}^n(\mathcal{R}(K) \cap \cl\mathcal{D}_k \setminus \partial^* E) + \frac{\delta_j}{2^{k+2}} \\ \notag
&\leq P(E;\Omega) + 2\,\mathcal{H}^n(\mathcal{R}(K) \setminus \partial^* E) + \frac{\delta_j}{2}\,, 
\end{align}
which is the desired perimeter bound in \eqref{need to show}.
\end{proof}

\begin{proof}[Proof of Theorem \ref{thm main of approximation} for $\Psi_{\rm bk}$]
    The proof can be accomplished by a simplification of the arguments in the boundary spanning case. First, as in step zero previously, it is enough to choose $\delta_j \to 0$ and produce a sequence of sets $\{E_j\}$ with $E_j^\one \cup (\Om \cap \pa^* E_j)$ $\mathcal{C}$-spanning such that 
\begin{align}\label{need to show bulk}
|E_j\Delta E|\leq \delta_j\quad \textup{and}\quad P(E_j;\Omega) \leq \mathcal{F}_{\rm bk}(K,E) + \delta_j\,.
\end{align}
Fix $\delta_j$, and let $A_0 \cup \cup_m f_m(A_m)$ be the decomposition of $\mathcal{R}(K)\cap E^\zero$ into an $\mathcal{H}^n$-null set and countably many compact Lipschitz graphs as in step two of the boundary case. Arguing as in step three, specifically the inequalities \eqref{small volume} and \eqref{closeness of Hausdorff business}, we may choose open sets $D_m$ containing $f_m(A_m)$ such that
\begin{align}\notag
|D_m| \leq 2^{-m}\delta_j\,,\quad P(D_m) \leq 2\mathcal{H}^n(f_m(A_m)) + 2^{-m}\delta_j \,.   
\end{align}    
We then set $E_j = E \cup \cup_m D_m$, so that the estimates
\begin{align}\notag
    |E_j \Delta E| \leq \sum_m |D_m| \leq \delta_j\,,\quad P(E_j;\Om) \leq P(E;\Om) + \sum_m P(D_m;\Om)\leq P(E;\Om) + \delta_j
\end{align}
follow. To see that $E_j^\one \cup (\Om \cap \pa^*E_j)$ is $\mathcal{C}$-spanning, we first note that, since supersets of $\mathcal{C}$-spanning sets are $\mathcal{C}$-spanning and property of being $\mathcal{C}$-spanning is stable under $\mathcal{H}^n$-null perturbations, it is enough to show that
\begin{align}\label{why is bulk spanning}
    E^\one \cup (\Om \cap \pa^* E) \cup (\mathcal{R}(K) \cap E^\zero) \shn E_j^\one \cup (\Om \cap \pa^* E_j)\,,
\end{align}
where the former set is $\mathcal{C}$-spanning by Theorem \ref{theorem spanning with partition}. Since $E_j$ is a superset of $E$, we have: first, that $E^\one \subset E_j^\one$; and second, that the Lebesgue density of $E_j$ at any $x\in \Om \cap \pa^* E$ is at least $1/2$. Therefore, by Federer's theorem, $E^\one \cup (\Om \cap \pa^* E)\shn E_j^\one \cup (\Om \cap \pa^* E_j)$. The inclusion $\mathcal{R}(K)\cap E^\zero\shn E_j^\one \cup (\Om \cap \pa^* E_j)$ follows directly from the inclusions $\mathcal{R}(K) \cap E^\zero \shn \cup_m f_m(A_m)$ and $f_m(A_m) \subset D_m^\one \subset E_j^\one$. 
\end{proof}

\subsection{Proof of Theorem \ref{thm main of minimality}} We can now show the equivalence of the collapsed and non-collapsed minimization problems.
\begin{proof}[Proof of Theorem \ref{thm main of minimality}]

For the equalities $\Psi_{\rm bd}(v)=\psi_{\rm bd}(v)$ and $\Psi_{\rm bk}(v)=\psi_{\rm bk}(v)$, first recall that $\Psi_{\rm bd}(v)\leq\psi_{\rm bd}(v)$ and $\Psi_{\rm bk}(v)\leq\psi_{\rm bk}(v)$, as noted in Section \ref{sec:previous results}. The reverse inequalities are direct consequences of Theorem \ref{thm main of approximation}, since any minimizing sequence for either relaxed problem \eqref{modified kms relax prob}/\eqref{modified kms relax bulk} can be approximated by a minimizing sequence for the corresponding soap film capillarity problem \eqref{soap film capillarity model}/\eqref{soap film capillarity model bulk}. It remains to show that, when $\wire$ is compact, given a minimizing sequence $\{E_j\}_j$ for $\psi_{\rm bd}(v)$ or $\psi_{\rm bk}(v)$, up to a subsequence we can extract $(K,E)\in \mathcal{K}_{\rm B}$ and ball $B_r(x)\subset(K \cup E \cup \wire)^c$ of volume $v-|E|$ such that $E_j\to E$ in locally in $L^1$,
\begin{align}\notag
    \mathcal{H}^n\mres(\Omega \cap \partial^* E_j) &\weakstar \mathcal{H}^n\mres(\Omega \cap \partial^* E) + 2\mathcal{H}^n\mres (K \setminus \partial^* E)\quad\mbox{or} \\ \notag
    \mathcal{H}^n\mres(\Omega \cap \partial^* E_j) &\weakstar \mathcal{H}^n\mres(\Omega \cap \partial^* E) + 2\mathcal{H}^n\mres (K \cap E^\zero)\,,
\end{align}
and $(K \cup \pa B_r(x),E \cup B_r(x))$ is minimal for $\Psi_{\rm bd}(v)$ or $\Psi_{\rm bk}(v)$.
The statement for the bulk problem $\psi_{\rm bk}(v)$ can be deduced as follows. By $\Psi_{\rm bk}(v) = \psi_{\rm bk}(v)$ and the fact that $E_j$ are admissible for $\Psi_{\rm bk}(v)$, $\{E_j\}_j$ is a minimizing sequence for $\Psi_{\rm bk}(v)$, and so \cite[Theorem 6.2]{MNR1}, which extracts such a ball and pair $(K,E)\in \mathcal{K}_{\rm B}$ out of an arbitrary minimizing sequence for $\Psi_{\rm bk}(v)$ such as $\{E_j\}_j$, yields the desired result. For $\psi_{\rm bd}(v)$, we can follow the same strategy. Let us sketch the argument, which, given the compactness in Theorem \ref{theorem collapsed capillarity compactness}, consists of standard arguments to deal with volume loss at infinity. By Theorem \ref{theorem collapsed capillarity compactness} and compactness for sets of finite perimeter, up to a subsequence, we obtain a limiting pair $(K,E)$ which is admissible for $\Psi_{\rm bd}(|E|)$ (where $v=0$ is $2\ell_{\rm B}$) such that $K$ is $\mathcal{C}$-spanning and 
\begin{align}\label{minni ineq}
    \mathcal{H}^n\mres (\Om \cap \pa^* E_j) \weakstar \mu \geq \mathcal{H}^n\mres (\Om \cap \pa^* E)  + 2\mathcal{H}^n\mres (K \setminus \pa^* E)\,.
\end{align}
Since $\wire$ is compact, the Euclidean isoperimetric inequality and \eqref{minni ineq} imply that the escaping mass contributes at least as much energy as a ball (see \cite[Proof of Theorem 6.2, step three]{MNR1}), that is
$$
\Psi_{\rm bd}(v) \geq \mathcal{F}_{\rm bd}(K,E) + P(B_r(x))
$$
whenever $|B_r(x)|=v-|E|$. However, by a construction consisting of adding balls of volume $v-|E|$ to any admissible $(K',E')$ for $\Psi_{\rm bd}(|E|)$ (see \cite[Proof of Theorem 6.2, step two]{MNR1}), we also have the reverse inequality
$$
\Psi_{\rm bd}(v) \leq \Psi_{\rm bd}(|E|) + P(B_r(x))\leq \mathcal{F}_{\rm bd}(K,E) + P(B_r(x))
$$
Combining these two, we find that $(K,E)$ is minimal for $\Psi_{\rm bd}(|E|)$, and so by Proposition \ref{rect part is spanning prop} and $\Om \cap \pa^* E \shn \mathcal{R}(K)$, we have $K\ehn \mathcal{R}(K)$. By a first variation argument \cite[Appendix C]{KMS1}, the integer multiplicity rectifiable varifold $V=\var(K,\theta)$, where $\theta=1$ on $\pa^* E\cap \Om$ and $\theta=2$ on $K \setminus \pa^* E$, has $L^\infty$-mean curvature in $\mathbb{R}^{n+1}\setminus \wire$. Therefore, by the monotonicity formula and the boundedness of $\wire$, $\spt\, V$ is bounded, and so are $\mathcal{R}(K)$ and $E$. Since $\wire \cup K \cup E$ is bounded and $\Psi_{\rm bd}(v) = \mathcal{F}_{\rm bd}(K,E) + P(B_r(x))$ whenever $|B_r(x)| = v - |E|$, we may choose $B_r(x)\subset (K \cup E \cup \wire)^c$ to add to $(K,E)$ and conclude the argument.
\end{proof}

\noindent{\bf Funding} This work was supported by National Science Foundation Grant DMS-2000034, National Science Foundation FRG Grant DMS-1854344, and National Science Foundation RTG Grant DMS-1840314. 

\medskip

\noindent{\bf Competing Interests} The author has no competing interests to declare that are relevant to the content of this article.

\medskip

\noindent{\bf Data Availability} There is no external data associated with this work.

\bibliographystyle{abbrv}
\bibliography{references}

\begin{thebibliography}{10}

\bibitem{AFP}
L.~Ambrosio, N.~Fusco, and D.~Pallara.
\newblock {\em Functions of bounded variation and free discontinuity problems}.
\newblock Oxford Mathematical Monographs. The Clarendon Press, Oxford University Press, New York, 2000.

\bibitem{Brakke05}
K.~Brakke.
\newblock Instability of the wet cube cone soap film.
\newblock {\em Colloids and Surfaces A: Physicochemical and Engineering Aspects}, 263(1):4--10, 2005.
\newblock A collection of papers presented at the 5th European Conference on Foams, Emulsions, and Applications, EUFOAM 2004, University of Marne-la-Vallee, Champs sur Marne (France), 5-8 July, 2004.

\bibitem{BrakkeMorgan}
K.~Brakke and F.~Morgan.
\newblock Instability of the wet {$X$} soap film.
\newblock {\em J. Geom. Anal.}, 8(5):749--767, 1998.

\bibitem{CCDPMSteiner}
F.~Cagnetti, M.~Colombo, G.~De~Philippis, and F.~Maggi.
\newblock Rigidity of equality cases in {S}teiner's perimeter inequality.
\newblock {\em Anal. PDE}, 7(7):1535--1593, 2014.

\bibitem{CCDPM17}
F.~Cagnetti, M.~Colombo, G.~De~Philippis, and F.~Maggi.
\newblock Essential connectedness and the rigidity problem for {G}aussian symmetrization.
\newblock {\em J. Eur. Math. Soc. (JEMS)}, 19(2):395--439, 2017.

\bibitem{foamchapter}
I.~Cantat, S.~Cohen-Addad, F.~Elias, F.~Graner, R.~H\"{o}hler, O.~Pitois, F.~Rouyer, A.~Saint-Jalmes, R.~Flatman, and S.~Cox.
\newblock {\em Foams: {S}tructure and {D}ynamics}.
\newblock Oxford University Press, 2013.

\bibitem{DLDRG}
C.~De~Lellis, A.~De~Rosa, and F.~Ghiraldin.
\newblock A direct approach to the anisotropic {P}lateau problem.
\newblock {\em Adv. Calc. Var.}, 12(2):211--223, 2019.

\bibitem{DGM}
C.~De~Lellis, F.~Ghiraldin, and F.~Maggi.
\newblock A direct approach to {P}lateau's problem.
\newblock {\em J. Eur. Math. Soc. (JEMS)}, 19(8):2219--2240, 2017.

\bibitem{DPDRG}
G.~De~Philippis, A.~De~Rosa, and F.~Ghiraldin.
\newblock A direct approach to {P}lateau's problem in any codimension.
\newblock {\em Adv. Math.}, 288:59--80, 2016.

\bibitem{DPDRG2}
G.~De~Philippis, A.~De~Rosa, and F.~Ghiraldin.
\newblock Existence results for minimizers of parametric elliptic functionals.
\newblock {\em J. Geom. Anal.}, 30(2):1450--1465, 2020.

\bibitem{DR}
A.~De~Rosa.
\newblock Minimization of anisotropic energies in classes of rectifiable varifolds.
\newblock {\em SIAM J. Math. Anal.}, 50(1):162--181, 2018.

\bibitem{FangKo}
Y.~Fang and S.~Kolasi\'{n}ski.
\newblock Existence of solutions to a general geometric elliptic variational problem.
\newblock {\em Calc. Var. Partial Differential Equations}, 57(3):Paper No. 91, 71, 2018.

\bibitem{F}
H.~Federer.
\newblock {\em Geometric measure theory}.
\newblock Die Grundlehren der mathematischen Wissenschaften, Band 153. Springer-Verlag New York Inc., New York, 1969.

\bibitem{Finn}
R.~Finn.
\newblock {\em Equilibrium capillary surfaces}, volume 284 of {\em Grundlehren der mathematischen Wissenschaften [Fundamental Principles of Mathematical Sciences]}.
\newblock Springer-Verlag, New York, 1986.

\bibitem{HP16}
J.~Harrison and H.~Pugh.
\newblock Existence and soap film regularity of solutions to {P}lateau's problem.
\newblock {\em Adv. Calc. Var.}, 9(4):357--394, 2016.

\bibitem{HP16b}
J.~Harrison and H.~Pugh.
\newblock Solutions to the {R}eifenberg {P}lateau problem with cohomological spanning conditions.
\newblock {\em Calc. Var. Partial Differential Equations}, 55(4):Art. 87, 37, 2016.

\bibitem{HP17}
J.~Harrison and H.~Pugh.
\newblock General methods of elliptic minimization.
\newblock {\em Calc. Var. Partial Differential Equations}, 56(4):Paper No. 123, 25, 2017.

\bibitem{HM}
J.~Hass and F.~Morgan.
\newblock Geodesics and soap bubbles in surfaces.
\newblock {\em Math. Z.}, 223(2):185--196, 1996.

\bibitem{KMS2}
D.~King, F.~Maggi, and S.~Stuvard.
\newblock Collapsing and the convex hull property in a soap film capillarity model.
\newblock {\em Ann. Inst. H. Poincar\'{e} C Anal. Non Lin\'{e}aire}, 38(6):1929--1941, 2021.

\bibitem{KMS1}
D.~King, F.~Maggi, and S.~Stuvard.
\newblock Plateau's problem as a singular limit of capillarity problems (revised).
\newblock {\em Comm. Pure Appl. Math.}, 75(5):895--969, 2022.

\bibitem{KMS3}
D.~King, F.~Maggi, and S.~Stuvard.
\newblock Smoothness of collapsed regions in a capillarity model for soap films.
\newblock {\em Arch. Ration. Mech. Anal.}, 243(2):459--500, 2022.

\bibitem{foamdrainageI}
S.~Koehler, S.~Hilgenfeldt, and H.~Stone.
\newblock Foam drainage on the microscale: I. modeling flow through single plateau borders.
\newblock {\em Journal of Colloid and Interface Science}, 276(2):420--438, 2004.

\bibitem{foamdrainageII}
S.~Koehler, S.~Hilgenfeldt, E.~Weeks, and H.~Stone.
\newblock Foam drainage on the microscale {II}. imaging flow through single plateau borders.
\newblock {\em Journal of Colloid and Interface Science}, 276(2):439--449, 2004.

\bibitem{maggiBOOK}
F.~Maggi.
\newblock {\em Sets of finite perimeter and geometric variational problems}, volume 135 of {\em Cambridge Studies in Advanced Mathematics}.
\newblock Cambridge University Press, Cambridge, 2012.
\newblock An introduction to geometric measure theory.

\bibitem{MNR2}
F.~Maggi, M.~Novack, and D.~Restrepo.
\newblock A hierarchy of {P}lateau problems and the approximation of {P}lateau's laws via the {A}llen--{C}ahn equation, ar{X}iv:2312.11139, 2023.

\bibitem{MNR1}
F.~Maggi, M.~Novack, and D.~Restrepo.
\newblock Plateau borders in soap films and {G}auss' capillarity theory, ar{X}iv:2310.20169, 2023.

\bibitem{MSS}
F.~Maggi, S.~Stuvard, and A.~Scardicchio.
\newblock Soap films with gravity and almost-minimal surfaces.
\newblock {\em Discrete Contin. Dyn. Syst.}, 39(12):6877--6912, 2019.

\bibitem{Mil97}
J.~W. Milnor.
\newblock {\em Topology from the differentiable viewpoint}.
\newblock University Press of Virginia, Charlottesville, VA, 1965.
\newblock Based on notes by David W. Weaver.

\bibitem{MN23}
M.~Novack.
\newblock Regularity for minimizers of a planar partitioning problem with cusps, ar{X}iv:2305.11865, 2023.

\bibitem{Simon}
L.~Simon.
\newblock {\em Lectures on geometric measure theory}, volume~3 of {\em Proceedings of the Centre for Mathematical Analysis, Australian National University}.
\newblock Australian National University, Centre for Mathematical Analysis, Canberra, 1983.

\bibitem{Taylor}
J.~E. Taylor.
\newblock The structure of singularities in soap-bubble-like and soap-film-like minimal surfaces.
\newblock {\em Ann. of Math. (2)}, 103(3):489--539, 1976.

\bibitem{weaireBOOK}
D.~Weaire and S.~Hutzler.
\newblock {\em The Physics of Foams}.
\newblock Oxford University Press, 1999.

\end{thebibliography}
\end{document}